\definecolor{webgreen}{rgb}{0,.5,0}
\definecolor{webbrown}{rgb}{.8,0,0}
\definecolor{emphcolor}{rgb}{0.95,0.95,0.95}
\newtheorem{theorem}{Theorem}[section]
\newtheorem{assumption}[theorem]{Assumption}
\newtheorem{definition}[theorem]{Definition}
\newtheorem{example}[theorem]{Example}
\newtheorem{lemma}[theorem]{Lemma}
\newtheorem{remark}[theorem]{Remark}
\numberwithin{equation}{section}
\newcommand{\bu}{\mathbf{u}}
\newcommand{\X}{\mathbb{X}}
\newcommand{\U}{\mathcal{U}}
\newcommand{\D}{\mathcal{D}}
\newcommand{\A}{\mathcal{A}}
\newcommand{\ba}{\mathbf{a}}
\newcommand{\tr}{\mathrm{tr}}
\title{Equilibrium stochastic control with implicitly defined objective functions}
\author{Zongxia Liang\thanks{ Email: liangzongxia@mail.tsinghua.edu.cn, 
		Department of Mathematical Sciences, Tsinghua University, Beijing 100084, China.}
	\and Jianming Xia\thanks{ Email: xia@amss.ac.cn, 
	RCSDS, NCMIS, Academy of Mathematics and Systems Science, Chinese Academy of Sciences, Beijing 100190, China.}
	\and Keyu Zhang\thanks{ Email: zhangky21@mails.tsinghua.edu.cn, 
		Department of Mathematical Sciences, Tsinghua University, Beijing 100084, China.}}
\begin{document}

\maketitle
\begin{abstract}
	This paper considers a class of stochastic control problems with implicitly defined objective functions, which are the sources of  time-inconsistency. We study the closed-loop equilibrium {solutions} in a general controlled diffusion framework. First, we provide a sufficient and necessary condition for a strategy to be an equilibrium. Then, we apply the result to discuss two problems of dynamic portfolio selection for a class of betweenness preferences, allowing for closed convex constraints on portfolio weights and borrowing cost, respectively. The equilibrium portfolio strategies are explicitly characterized in terms of the solutions of some first-order ordinary differential equations for the case of deterministic market coefficients. 
	
	\noindent \textbf{Mathematics Subject Classification (2020)}: 91G10, 93E20
	\\
	\noindent {\small\textbf{Keywords:}} stochastic control, implicitly defined objective function, equilibrium strategy, portfolio selection, betweenness preference
\end{abstract}

\section{Introduction}
This paper is concerned with a class of  stochastic control problems, in which the objective function is implicitly defined; see (\ref{F}) in Section \ref{sec:pro:formulation}. In general, this kind of problem inherently exhibit time inconsistency: a decision deemed optimal today may no longer be optimal at a certain future time. In mathematical terms, the classic dynamic programming principle, namely, the Bellman optimality principle, is not satisfied. We follow the intra-personal equilibrium approach to treat the time inconsistency,  
by which the agent makes a decision taking  into account the future selves' behavior and looks for the best current action in response to that to ensure the strategy's time-consistency.
 
The intra-personal equilibrium approach dates back to \cite{Strotz1955}. A precise mathematical formalization of such an equilibrium in continuous time was first given by \cite{Ekeland2006} for a deterministic Ramsey model with non-exponential discounting. It is then developed, among others, by
\cite{Ekeland2008}, 
\cite{Bjork-2017}, \cite{Hu-2012}, \cite{Yong2012} and \cite{Hejiang2021} in continuous-time stochastic control. In particular, \cite{Bjork-2017} studied a general Markovian framework with diffusion dynamics for the controlled state process, introduced an equilibrium strategy and derived the so-called extended Hamilton-Jacobi-Bellman (HJB) system whose solution allows to construct an equilibrium for the problem. \cite{Hejiang2021} filled in a missing step in \cite{Bjork-2017} by deriving rigorously the extend HJB system and refining the definition of equilibrium.

As far as we know, in almost all of the existing literature on continuous-time stochastic control, the objective functions are explicitly defined.\footnote{The only exception is the recent work \cite{liang2023dynamic}, which investigates the open-loop equilibrium {solutions} to the continuous-time portfolio selection for betweenness preferences.} On the other hand, the implicitly defined objective functions have broad interests in economics and finance, including as important special cases the betweenness preference functionals  
of \cite{Chew-1983,Chew-1989,Dekel-1986}, the risk indices of \cite{Aumann2008,Foster2009}, and the expectile-type risk measures of \cite{Newey1978,Kuan2009,Bellini2014,MaoCai2018}.  
The aim of the present paper is to study the closed-loop equilibrium {solutions} for the continuous-time stochastic control with implicitly defined objective functions. 
Our first contribution is to provide a sufficient and necessary condition for a strategy to be an equilibrium strategy.  
Our second contribution is to apply the result to study the continuous-time portfolio selection problems with betweenness preferences.\footnote{Portfolio selection problems are well-studied with the expected utility (EU) theory; see,  for instance, \cite{Merton1971}. EU has the advantage of time-consistency so that dynamic programming principle can be applied. 
However, various empirical and experimental studies contradicts EU theory, as exemplified by the famous Allais paradox of \cite{Allais-1953}. This motivates the emergence of many theories on alternative preferences, including the betweenness preferences; see, for instance, \cite{Chew-1983,Chew-1989} and \cite{Dekel-1986}.}
Specifically, we consider the dynamic portfolio selection with constant-relative-risk-aversion (CRRA) betweenness preferences, with closed convex constraints on portfolio weights or with borrowing cost: the interest rate for borrowing money being larger than for saving.\footnote{Constrained portfolio optimization problems with EU are well-documented in the literature, see, for instance, \cite{karatzas-1992}, \cite{Hu-2005} and \cite{Cuoco-1997}.   The works related to the borrowing cost with EU and mean-variance criterion include \cite{Flem-1991, Xu-1998, Guan2020, Guan2023} and \cite{Yang2019}, among others.}   When the market coefficients are deterministic, we explicitly characterize the closed-loop equilibrium strategies in terms of solutions of some first-order ordinary differential equations (ODEs) and show the uniqueness within continuous state-independent strategies. In particular, when the constraint is removed, we recover the solution in \cite{liang2023dynamic}.  Moreover, it is conceivable that the problems with constant-absolute-risk aversion (CARA) betweenness preference (see, e.g. \cite{liang2023dynamic}) are tractable as well.

   
The rest of the paper is organized as follows. Section \ref{sec:pro:formulation} introduces a
general diffusion framework of stochastic control problems with implicitly defined objective functions and formalizes the definition of closed-loop equilibrium strategies. Section \ref{sec:suffandness} gives a sufficient and necessary condition for a strategy to be an equilibrium strategy. In Section \ref{sec:example}, we apply the result in Section \ref{sec:suffandness} to study portfolio selection problems. 

\section{The model}\label{sec:pro:formulation}
\subsection{Notation}
We introduce some notations first. Throughout the paper, the state space $\X$ is either $(0,+\infty)$ or $\mathbb{R}^{n}$, the $n$-dimensional Euclidean space. Denote the Euclidean norm of a vector $x$ as $\lVert x\rVert$. Denote by
$A^{\top}$ the transpose of matrix $A$,  by $\tr(A)$ the trace of a square matrix $A$, and by  $\overline{\mathbb{B}_{r}(x)}$  the closed ball with $x$ as its center and $r$ as its radius.

Let $\mathbb{T}$ be a certain set. Consider a map $\xi:\mathbb{T}\times\X\ni(t,x) \mapsto \xi(t,x)\in\mathbb{R}^{l}$. We say that $\xi$ is locally Lipschitz in $x\in\X$, uniformly in $t\in\mathbb{T}$, if there exists a sequence $\left\{L_{k}\right\}_{k\geq1}$ of positive numbers such that, for any $k\geq1$, $\lVert\xi(t,x)-\xi(t,x')\rVert\leq L_{k}\lVert x-x'\rVert, \, \forall t\in\mathbb{T},\, x,x' \in\overline{\mathbb{B}_{k}(0)}\cap\X.$ In the case $\mathbb{X}=\mathbb{R}^{n}$, we say that $\xi$ is of linear growth in $x\in\mathbb{X}$, uniformly in $t\in\mathbb{T}$, if there exists $L>0$ such that $\lVert\xi(t,x)\rVert\leq L\left(1+\lVert x\rVert\right), \forall (t,x)\in\mathbb{T}\times\X$. In the case $\mathbb{X}=\left(0,+\infty\right)$, we say that $\xi$ has bounded norm in $x\in\mathbb{X}$, uniformly in $t\in\mathbb{T}$, if there exists $L>0$ such that $\lVert\xi(t,x)\rVert\leq Lx, \forall (t,x)\in\mathbb{T}\times\X$. 

Let $\mathbb{Z}$ be a topology space. We say that a function $L:\mathbb{Z}\to(0,+\infty)$ is locally bounded if $L$ is bounded on every compact subset of $\mathbb{Z}$. 
We say that a map $\xi:\X\times\mathbb{Z}\ni(x,z)\mapsto\xi(x,z)\in\mathbb{R}^l$ is of polynomial growth in $x\in\X$, local-uniformly in $z\in\mathbb{Z}$, if there exist a locally bounded function $L: \mathbb{Z}\to(0,+\infty)$ and an integer $\gamma\geq1$ such that $\lVert \xi(x,z)\rVert\leq L(z)\left(1+\varphi_{2\gamma}(x)\right),\, (x,z)\in\mathbb{X}\times\mathbb{Z}$,
where 
		\begin{equation}\label{poly}
		\varphi_{2\gamma}(x)=
		    \begin{cases}
		        \lVert x\rVert^{2\gamma},\,&\text{if}\quad \mathbb{X}=\mathbb{R}^n,\\
		        x^{2\gamma}+ x^{-2\gamma},\,&\text{if}\quad \mathbb{X}=(0,+\infty).
		    \end{cases}
		  \end{equation}
We say a map 
$\xi: \mathbb{T}\times\X\times\mathbb{Z}\ni(t,x,z)\mapsto \xi(t,x,z)\in\mathbb{R}^{l}$ is of polynomial growth in $x\in\X$, local-uniformly in $z\in\mathbb{Z}$ and uniformly in $t\in\mathbb{T}$, if there exist a locally bounded function $L: \mathbb{Z}\to(0,+\infty)$ and an integer $\gamma\geq1$ such that $\lVert \xi(t,x,z)\rVert\leq L(z)\left(1+\varphi_{2\gamma}\left(x\right)\right), \forall (t,x,z)\in\mathbb{T}\times\X\times\mathbb{Z}$, where $\varphi_{2\gamma}$ is defined by (\ref{poly}).

\subsection{Problem Formulation}
Let $\left(\Omega,\mathcal{F},\mathbb{F},\mathbb{P}\right)$ be a filtered probability space, where $\mathbb{F}=\left\{\mathcal{F}_{t}\right\}_{0\leq t\leq T}$ is the filtration generated by a standard $d$-dimensional Brownian motion, $W(t):= \bigl(W_{1}(t),\cdots,W_{d}(t)\bigr)^{\top}$, $0\leq t\leq T$, augmented by all null sets. Consider the following controlled stochastic differential equation (SDE):
\begin{equation}\label{problem}
	\begin{cases}
		 dX^{\textbf{u}}(s)=\mu(s,X^{\textbf{u}}(s),\textbf{u}(s,X^{\textbf{u}}(s)))ds+\sigma(s,X^{\textbf{u}}(s),\textbf{u}(s,X^{\textbf{u}}(s)))dW(s),\, s\in[t,T],\, \\X^{\textbf{u}}(t)=x,
	\end{cases}
\end{equation}
where 
\begin{itemize}
\item 
 $\textbf{u}:[0,T]\times\mathbb{X}\to\mathbb{U}$ is a closed-loop (feedback) strategy and $\mathbb{U}$ is a separable metric space;
 \item the controlled diffusion process (state process) $X^{\textbf{u}}$ takes values in a state space $\mathbb{X}$, being either $\left(0,+\infty\right)$ or $\mathbb{R}^{n}$;
 \item $\mu: [0, T]\times\mathbb{X}\times\mathbb{U}\to\mathbb{R}^{n}$ and $\sigma: [0, T]\times\mathbb{X}\times\mathbb{U}\to\mathbb{R}^{n\times d}$ are measurable mappings,
where $n$ stands for the dimension of the state space $\mathbb{X}$. 
\end{itemize}

A feedback strategy $\textbf{u}$ is called \emph{state-independent} if it 
does not depend on the state $x$, i.e., it is a deterministic function of time. Moreover, a state-independent strategy $\textbf{u}$ is called \emph{continuous} if 
it is a continuous function of time.

In contrast to  standard stochastic control problems, the agent's objective function at time $t$, $J(t,x;\textbf{u})$, is defined implicitly as the solution of the following equation 
\begin{equation}\label{F}
	\mathbb{E}\left[K(X^{\textbf{u}}(T),J(t,x;\textbf{u}))|X^{\bu}(t)=x\right]=0,
\end{equation}
where $K:\mathbb{X}\times\mathbb{Z}\to\mathbb{R}$ with $\mathbb{Z}$ being either $\left(0,+\infty\right)$ or $\mathbb{R}$ is measurable and satisfies the following assumption.
\begin{assumption}\label{a1}
\begin{itemize}
\item[(i)] For every $x\in\mathbb{X}$, $K(x,z)$ is continuous and strictly decreasing in $z\in\mathbb{Z}$.
		
\item[(ii)] $K(x,z)$ is of polynomial growth in $x\in\X$, local-uniformly in $z\in\mathbb{Z}$.

\item[(iii)] For every $x\in\mathbb{X}$, $\inf\limits_{z\in\mathbb{Z}}K(x,z)<0<\sup\limits_{z\in\mathbb{Z}}K(x,z)$.
	\end{itemize}
\end{assumption}

For any feedback strategy $\textbf{u}$, let
\begin{align*}
	\mu^{\bu}(t,x):=\mu(t,x,\textbf{u}(t,x)),\, \sigma^{\bu}(t,x):=\sigma(t,x,\bu(t,x)),\,
	\Gamma^{\bu}(t,x):=\sigma(t,x,\bu(t,x))\sigma(t,x,\bu(t,x))^{\top}.
\end{align*}

\begin{definition}\label{d1}
	A feedback strategy $\mathbf{u}$ is called \emph{admissible} if the following three conditions hold:
	\begin{itemize}
		\item[(i)]  $\mu^{\bu}$ and $\sigma^{\bu}$ are locally Lipschitz in $x\in\mathbb{X}$, uniformly in $t\in[0, T]$.
		
		\item[(ii)] When $\mathbb{X}=\mathbb{R}^{n}$, $\mu^{\bu}$ and $\sigma^{\bu}$ are of linear growth in $x\in\mathbb{X}$, uniformly in $t\in[0, T]$; when $\mathbb{X}=\left(0,+\infty\right)$, $\mu^{\bu}$ and $\sigma^{\bu}$ have bounded norm in $x\in\mathbb{X}$, uniformly in $t\in[0, T]$.
		
		\item[(iii)] For each $x\in\X$, $\mu^{\bu}(t,x)$ and $\sigma^{\bu}(t,x)$ are right continuous in $t\in[0,T)$.	
	
\end{itemize}
Denote  by $\U$ the set of admissible strategies.
\end{definition}

In addition to guaranteeing the wellposedness of SDE (\ref{problem}), Conditions (i) and (ii) in Definition \ref{d1} lead to a favorable estimation of the solution; see Lemma \ref{lemmaonextimation}. Condition (iii) in Definition \ref{d1} imposes a mild continuity requirement, which will be used in the following study. 
\begin{lemma}\label{lemmaobj}
    Suppose that Assumption \ref{a1} holds. Then, for each $\left(t,x\right)\in[0,T)\times \X$ and $\bu\in\U$, there exists $z^{\bu}\in\mathbb{Z}$ such that 
		$f^{\bu}(t,x,z^{\bu})=0$, where $f^{\bu}$ is defined 
		by 
		\begin{equation}\label{fu}
	f^{\bu}(t,x,z):=\mathbb{E}\left[K(X^{\bu}(T),z)|X^{\bu}(t)=x\right],\quad z\in\mathbb{Z}.
\end{equation}
\end{lemma}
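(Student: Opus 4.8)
The plan is to fix $(t,x)\in[0,T)\times\X$ and $\bu\in\U$ and to show that the map $g(z):=f^{\bu}(t,x,z)$ from (\ref{fu}) is a finite-valued, continuous, strictly decreasing function on the open interval $\mathbb{Z}$ that is positive near $\inf\mathbb{Z}$ and negative near $\sup\mathbb{Z}$; the existence of $z^{\bu}$ with $g(z^{\bu})=0$ will then follow from the intermediate value theorem. To see that $g$ is real-valued, I would combine Assumption \ref{a1}(ii) --- which furnishes a locally bounded $L\colon\mathbb{Z}\to(0,+\infty)$ and an integer $\gamma\ge1$ with $|K(y,z)|\le L(z)\bigl(1+\varphi_{2\gamma}(y)\bigr)$ for all $(y,z)\in\X\times\mathbb{Z}$ --- with the moment bound supplied by Lemma \ref{lemmaonextimation}, which applies because $\bu\in\U$ verifies Conditions (i) and (ii) of Definition \ref{d1}; this gives $\mathbb{E}\bigl[\varphi_{2\gamma}(X^{\bu}(T))\mid X^{\bu}(t)=x\bigr]<\infty$, hence $\mathbb{E}\bigl[|K(X^{\bu}(T),z)|\mid X^{\bu}(t)=x\bigr]<\infty$ for every $z\in\mathbb{Z}$.

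Next I would establish continuity and strict monotonicity of $g$ in $z$. Fixing $z_0\in\mathbb{Z}$ and a compact interval $N\subset\mathbb{Z}$ with $z_0$ in its interior, the bound $|K(X^{\bu}(T),z)|\le\bigl(\sup_{z'\in N}L(z')\bigr)\bigl(1+\varphi_{2\gamma}(X^{\bu}(T))\bigr)$ for $z\in N$ is integrable and dominates, while $z\mapsto K(X^{\bu}(T),z)$ is continuous by Assumption \ref{a1}(i); dominated convergence then yields continuity of $g$ at $z_0$. For $z_1<z_2$ in $\mathbb{Z}$, Assumption \ref{a1}(i) makes $K(X^{\bu}(T),z_1)-K(X^{\bu}(T),z_2)$ a strictly positive integrable random variable, so $g(z_1)>g(z_2)$ and $g$ is strictly decreasing (incidentally giving uniqueness of $z^{\bu}$, though that is not claimed).

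For the sign change at the endpoints I would use that, since $\mathbb{Z}$ is an open interval and $K(y,\cdot)$ is decreasing, $\lim_{z\downarrow\inf\mathbb{Z}}K(y,z)=\sup_{z\in\mathbb{Z}}K(y,z)>0$ and $\lim_{z\uparrow\sup\mathbb{Z}}K(y,z)=\inf_{z\in\mathbb{Z}}K(y,z)<0$ for every $y$, by Assumption \ref{a1}(iii). Along any sequence $z_n\downarrow\inf\mathbb{Z}$ in $\mathbb{Z}$, the random variables $K(X^{\bu}(T),z_n)$ increase pointwise to $\sup_{z\in\mathbb{Z}}K(X^{\bu}(T),z)>0$ and are bounded below by the integrable $-L(z_1)\bigl(1+\varphi_{2\gamma}(X^{\bu}(T))\bigr)$, so monotone convergence gives $g(z_n)\uparrow\mathbb{E}\bigl[\sup_{z\in\mathbb{Z}}K(X^{\bu}(T),z)\mid X^{\bu}(t)=x\bigr]\in(0,+\infty]$, whence $g(z_n)>0$ for large $n$; symmetrically, along $z_n\uparrow\sup\mathbb{Z}$ the values $K(X^{\bu}(T),z_n)$ decrease and are dominated above by $L(z_1)\bigl(1+\varphi_{2\gamma}(X^{\bu}(T))\bigr)$, so $g(z_n)\downarrow\mathbb{E}\bigl[\inf_{z\in\mathbb{Z}}K(X^{\bu}(T),z)\mid X^{\bu}(t)=x\bigr]\in[-\infty,0)$ and $g(z_n)<0$ for large $n$. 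Thus $g$ is continuous on $\mathbb{Z}$ and takes both a strictly positive and a strictly negative value, and the intermediate value theorem produces $z^{\bu}\in\mathbb{Z}$ with $f^{\bu}(t,x,z^{\bu})=g(z^{\bu})=0$.

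The main point requiring care is the boundary bookkeeping in the last step: because $L$ is only locally bounded on $\mathbb{Z}$, one cannot dominate $K(X^{\bu}(T),z)$ uniformly over all of $\mathbb{Z}$, so the argument near $\inf\mathbb{Z}$ (resp. $\sup\mathbb{Z}$) must be run along a sequence, using the monotonicity of $K(y,\cdot)$ to dominate $K(X^{\bu}(T),z_n)$ by the single integrable bound attached to the first term $z_1$ of the sequence --- which is exactly what monotone convergence requires. Aside from this, the proof is a routine application of the moment estimate of Lemma \ref{lemmaonextimation} together with the dominated and monotone convergence theorems.
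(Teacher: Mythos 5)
Your proposal is correct and follows essentially the same route as the paper: finiteness and polynomial growth of $f^{\bu}$ via Assumption \ref{a1}(ii) and Lemma \ref{lemmaonextimation}, strict monotonicity and continuity in $z$, a sign change near the endpoints of $\mathbb{Z}$ from Assumption \ref{a1}(iii) via monotone convergence, and then the intermediate value theorem. The only (immaterial) difference is that you invoke dominated convergence for continuity and spell out the endpoint domination in detail, whereas the paper compresses these steps into a single appeal to monotone convergence.
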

\begin{proof}
A combination Assumption \ref{a1}(ii) and Lemma \ref{lemmaonextimation} implies that the function $f^{\bu}$ 
is well defined by (\ref{fu}) and is of polynomial growth in $x\in\mathbb{X}$, local-uniformly in $z\in\mathbb{Z}$ and uniformly in $t\in[0,T]$. Obviously, $f^{\bu}$ is strictly decreasing in $z\in\mathbb{Z}$. The monotone convergence theorem implies that $f^{\bu}(t,x,z)$ is continuous in $z\in\mathbb{Z}$, $\inf_{z\in\mathbb{Z}}f^{\bu}(t,x,z)<0$ and $\sup_{z\in\mathbb{Z}}f^{\bu}(t,x,z)>0$. Then the conclusion follows.
\end{proof}

Lemma \ref{lemmaobj} implies that the objective function $J(t,x;\bu)$ is well defined by \eqref{F} for any $(t,x) \in[0,T)\times\X$ and $\bu\in\U$. Moreover, \eqref{F} reads $$f^{\bu}(t,x,J(t,x;\bu))=0.$$


We say that $\bu^{*}\in\U$ is optimal for $(t,x)$ if $\bu^{*}$ maximizes $J(t,x;\bu)$ over $\bu\in\U$. In the case that $K$ is separable, i.e., $K(x,z)=U_{1}(x)-U_{2}(z)$ for some functions $U_{1}$ and $U_{2}$ , where $U_{2}$ is a strictly increasing function, the objective function is equivalent to  $\mathbb{E}\left[U_{1}\left(X^{\bu}(T)\right)|X^{\bu}(t)=x\right]$ and the corresponding problem  is therefore time-consistent. In general, the  problem is time-inconsistent: the optimal strategy $\bu^{*}$ for $(t,x)$  is not necessarily optimal for $\left(s,X^{\bu^{*}}(s)\right)$, $s>t$.
Therefore, we consider the equilibrium solutions, instead of optimal solutions, following \cite{Bjork-2017,Hejiang2021}.

Hereafter, we always consider a fixed $\hat{\bu}\in\U$, which is a candidate equilibrium strategy. Moreover, we use $\D$, which is a subset of $\U$, to denote the set of alternative strategies that at each time $t$ the agent can choose to implement in an infinitesimally small time period.
For given $t\in[0,T)$, $\epsilon\in(0,T-t)$, and $\ba\in\D$, let
\begin{equation}
	\bu_{t,\epsilon,\ba}(s,y):=\begin{cases}
		\ba(s,y),\quad s\in[t,t+\epsilon),\ y\in\X,\\
		\hat{\bu}(s,y),\quad s\notin[t,t+\epsilon),\ y\in\X.
	\end{cases}
\end{equation}
One can see that $\bu_{t,\epsilon,\ba}$ is admissible because  both of $\hat{\bu}$ and $\ba$ are admissible.

\begin{definition}{(Equilibrium Strategy)}
	We say that $\hat{\bu}$ is an equilibrium strategy if for every $(t,x)\in[0,T)\times\X$ and $\ba\in\D$, we have 
	\begin{equation}\label{eq}
		\limsup\limits_{\epsilon\downarrow0}\frac{J(t,x;\bu_{t,\epsilon,\ba})-J(t,x;\hat{\bu})}{\epsilon}\leq0.
	\end{equation}
\end{definition}

With a slight abuse of notation, each $u\in\mathbb{U}$ denotes the constant strategy $\bu\equiv u$. 
So $\mathbb{U}$ also stands for the set of all constant strategies.
We impose the following assumption throughout the paper.
\begin{assumption}\label{a2}
 $\mathbb{U}\subseteq\D\subseteq\U$.
\end{assumption}

\begin{remark}
The equilibrium stochastic control problem for implicitly defined objective functions is firstly investigated by \cite{liang2023dynamic} in the context of portfolio selection, where open-loop equilibrium controls are considered. In contrast,  closed-loop equilibrium strategies are considered in the present work. 
\end{remark}

\section{Characterization of equilibrium strategies}\label{sec:suffandness}

A difficulty in characterizing equilibrium strategies is that the objective function $J(t,x;\bu)$ is implicitly defined. This difficulty is overcome by the following lemma.

\begin{lemma}\label{lemma1}
	$\hat{\bu}$ is an equilibrium strategy if and only if, for any $(t,x)\in[0,T)\times\X$, $\ba\in\D$ and $\delta>0$, there exists some $\epsilon_{0}\in\left(0,T-t\right)$ such that
	\begin{equation}\label{lemmaieq1}
		\mathbb{E}\left[K(X^{\bu_{t,\epsilon,\ba}}(T),J(t,x;\hat{\bu})+\delta\epsilon)|X^{\bu_{t,\epsilon,\ba}}(t)=x\right]\leq0, \quad\forall\epsilon\in(0,\epsilon_{0}).
	\end{equation}
\end{lemma}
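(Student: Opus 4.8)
The plan is to exploit the strict monotonicity in $z$ of the function $f^{\bu}(t,x,z)$ (established in Lemma \ref{lemmaobj}) to convert the implicit definition of the objective function into an inequality, after which \eqref{eq} and \eqref{lemmaieq1} become two phrasings of the same condition.

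First I would fix $(t,x)\in[0,T)\times\X$ and $\ba\in\D$, and for $\epsilon\in(0,T-t)$ abbreviate $\bu_{\epsilon}:=\bu_{t,\epsilon,\ba}$, which is admissible, and $\hat{J}:=J(t,x;\hat{\bu})$. By Lemma \ref{lemmaobj} applied to $\bu_{\epsilon}\in\U$, the value $J(t,x;\bu_{\epsilon})$ is the unique point of $\mathbb{Z}$ at which the strictly decreasing function $z\mapsto f^{\bu_{\epsilon}}(t,x,z)$ vanishes; also $\hat{J}+\delta\epsilon\in\mathbb{Z}$ for every $\delta>0$, because $\mathbb{Z}$ is an interval unbounded from above and $\hat{J}+\delta\epsilon\geq\hat{J}\in\mathbb{Z}$. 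Since the left-hand side of \eqref{lemmaieq1} equals $f^{\bu_{\epsilon}}(t,x,\hat{J}+\delta\epsilon)$, and a strictly decreasing function is nonpositive precisely at and to the right of its unique zero, the key step is the chain of equivalences
\begin{align*}
	f^{\bu_{\epsilon}}(t,x,\hat{J}+\delta\epsilon)\leq0
	&\iff
	\hat{J}+\delta\epsilon\geq J(t,x;\bu_{\epsilon})\\
	&\iff
	\frac{J(t,x;\bu_{\epsilon})-\hat{J}}{\epsilon}\leq\delta ,
\end{align*}
valid for all $\delta,\epsilon>0$. Hence, for fixed $(t,x)$ and $\ba$, condition \eqref{lemmaieq1} is equivalent to the statement that for every $\delta>0$ there is $\epsilon_{0}\in(0,T-t)$ with $g(\epsilon):=\bigl(J(t,x;\bu_{\epsilon})-\hat{J}\bigr)/\epsilon\leq\delta$ for all $\epsilon\in(0,\epsilon_{0})$.

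Next I would match this with the equilibrium condition \eqref{eq} by invoking the elementary description of $\limsup$: writing $\limsup_{\epsilon\downarrow0}g(\epsilon)=\inf_{\eta\in(0,T-t)}\sup_{\epsilon\in(0,\eta)}g(\epsilon)$, if this quantity is $\leq0$ then for each $\delta>0$ some $\eta$ makes the inner supremum $<\delta$, so $\epsilon_{0}=\eta$ works; conversely, if such an $\epsilon_{0}$ exists for every $\delta>0$, then $\sup_{\epsilon\in(0,\epsilon_{0})}g(\epsilon)\leq\delta$, whence the infimum is $\leq\delta$, and letting $\delta\downarrow0$ gives $\limsup_{\epsilon\downarrow0}g(\epsilon)\leq0$. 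Combining this with the previous paragraph and taking the conjunction over all $(t,x)\in[0,T)\times\X$ and $\ba\in\D$ yields the asserted equivalence between \eqref{lemmaieq1} (for all such data and all $\delta>0$) and $\hat{\bu}$ being an equilibrium strategy.

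I do not anticipate a genuine obstacle: essentially all the content sits in the monotonicity-to-inequality translation displayed above, and the only points needing a line of care are that $\bu_{\epsilon}\in\U$ (so $J(t,x;\bu_{\epsilon})$ is well defined via Lemma \ref{lemmaobj}) and that the shifted level $\hat{J}+\delta\epsilon$ stays in the domain $\mathbb{Z}$, both of which are immediate.
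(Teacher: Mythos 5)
Your proof is correct and follows essentially the same route as the paper's: translate the $\limsup$ condition \eqref{eq} into the $\epsilon$--$\delta$ statement $J(t,x;\bu_{t,\epsilon,\ba})\leq J(t,x;\hat{\bu})+\delta\epsilon$, then use the strict monotonicity of $z\mapsto f^{\bu_{t,\epsilon,\ba}}(t,x,z)$ at its zero to rewrite this as \eqref{lemmaieq1}. You merely spell out in more detail the steps the paper labels ``easy to see.''
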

\begin{proof}
	It is easy to see that (\ref{eq}) holds if and only if, for any $\delta>0$, there exists some $\epsilon_{0}\in\left(0,T-t\right)$ such that
	\begin{equation*}
		J(t,x;\bu_{t,\epsilon,\ba})\leq J(t,x;\hat{\bu})+\delta\epsilon,\quad \forall\epsilon\in(0,\epsilon_{0}),
	\end{equation*}
i.e.,
\begin{equation*}
	f^{\bu_{t,\epsilon,\ba}}(t,x,J(t,x;\hat{\bu})+\delta\epsilon)\leq0, \quad \forall\epsilon\in(0,\epsilon_{0}),
\end{equation*}
which is equivalent to \eqref{lemmaieq1}.
\end{proof}

Now we introduce a generator of the controlled state process. Given $\bu\in\U$ and interval $[a,b]\subseteq[0,T)$, consider a function $\xi\in C^{1,2}([a,b]\times\X)$ and denote its first-order partial derivative with respect to $t$, the gradient vector with respect to $x$, and 
the Hessian matrix  with respect to $x$ by $\xi_{t}$, $\xi_{x}$, and $\xi_{xx}$, respectively. Let $\A^{\bu}\xi$ be defined by
\begin{equation*}
	\A^{\bu}\xi(t,x):=\xi_{t}(t,x)+\xi^{\top}_{x}(t,x)\mu^{\bu}(t,x)+\frac{1}{2}\tr\left(\xi_{xx}(t,x)\Gamma^{\bu}(t,x)\right),\quad (t,x)\in[a,b]\times\X.
\end{equation*}

 For the sake of convenience, we use $f$ to denote $f^{\hat{\bu}}$:
\begin{equation*}
	f(t,x,z):=f^{\hat{\bu}}(t,x,z)=\mathbb{E}\left[K(X^{\hat{\bu}}(T),z)\bigg|X^{\hat{\bu}}(t)=x\right], \quad (t,x,z)\in[0,T]\times\X\times\mathbb{Z}.
\end{equation*}
Moreover, $\A^{\bu}f$ denotes the function that is obtained by applying the operator $\A^{\bu}$ to $f$ as a function of $(t,x)$ while fixing $z$.

We need the following assumption on $f$ to study equilibrium strategies. 

\begin{assumption}\label{a3}
\begin{itemize}
\item[(i)]  For any $t\in[0,T)$, there exists $\tilde{t}\in(t,T)$ such that,  $f(\cdot,\cdot,z)\in C^{1,2}([t,\tilde{t}]\times\mathbb{X})$ for each fixed $z\in\mathbb{Z}$, and $\frac{\partial^{j+\alpha}f}{\partial t^{j}\partial x^{\alpha}}$ is of polynomial growth in $x\in\X$, local-uniformly in $z\in\mathbb{Z}$ and uniformly in $t'\in[t,\tilde{t}]$, for any derivative index $\alpha$ with $|\alpha|\leq 2-2j$ and $j=0,1$. 

\item[(ii)] For any $(t,x,z)\in[0,T)\times\X\times\mathbb{Z}$,  
there exists $\tilde{z}>z$, such that, for $\xi\in\{f_{t},\,f_{x},\,f_{xx}\}$,  $\xi(t',x',z')$ converges to $\xi(t,x,z')$ as $t'\downarrow t$ and $x'\rightarrow x$, uniformly in $z'\in[z,\tilde{z}]$, i.e., 
\begin{equation*}
	\lim\limits_{\substack{t\rightarrow t+ \\ x'\rightarrow x}}\sup\limits_{z'\in[z,\tilde{z}]}\lVert\xi(t',x',z')-\xi(t,x,z')\rVert=0.
\end{equation*}

\item[(iii)] For any $(t,x)\in[0,T)\times\X$, $f(t,x,z)$ is differentiable in $z\in\mathbb{Z}$, $f_{z}(t,x,z)<0$ for any $z\in\mathbb{Z}$, and $f_{t}(t,x,z)$, $f_{x}(t,x,z)$, $f_{xx}(t,x,z)$ are right continuous in  $z\in\mathbb{Z}$. 
\end{itemize}
\end{assumption}

The following lemma provides a sufficient condition of Assumption \ref{a3}(ii), which is easier to verify and will be used in proving Lemma \ref{lemma31}.

\begin{lemma}\label{remarktolemma}
	If for any $t\in[0,T)$, there exists $\tilde{t}\in(t,T)$ such that $\xi\in C\left([t,\tilde{t}]\times\X\times\mathbb{Z}\right)$, then $\xi$ satisfies Assumption \ref{a3}(ii).
\end{lemma}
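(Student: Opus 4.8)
The plan is to derive Assumption \ref{a3}(ii) from the elementary fact that a continuous function on a compact set is uniformly continuous. Fix an arbitrary $(t,x,z)\in[0,T)\times\X\times\mathbb{Z}$ and let $\tilde t\in(t,T)$ be as in the hypothesis, so that $\xi\in C\left([t,\tilde t]\times\X\times\mathbb{Z}\right)$. Since $\mathbb{Z}$ is either $(0,+\infty)$ or $\mathbb{R}$, we may pick $\tilde z>z$ with $[z,\tilde z]\subset\mathbb{Z}$; this $\tilde z$ will be the one required by Assumption \ref{a3}(ii).

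Next I would build a compact neighbourhood of $(t,x)$ inside $[t,\tilde t]\times\X$ on which to invoke uniform continuity. Choose $\eta>0$ with $t+\eta\le\tilde t$, and choose $\rho>0$ small enough that $\overline{\mathbb{B}_{\rho}(x)}\cap\X$ is compact: this is automatic when $\X=\mathbb{R}^{n}$, and when $\X=(0,+\infty)$ it suffices to take $\rho<x$, in which case $\overline{\mathbb{B}_{\rho}(x)}\cap\X=[x-\rho,x+\rho]$. Then
\[
Q:=[t,t+\eta]\times\bigl(\overline{\mathbb{B}_{\rho}(x)}\cap\X\bigr)\times[z,\tilde z]
\]
is a compact subset of $[t,\tilde t]\times\X\times\mathbb{Z}$, so the restriction of $\xi$ to $Q$ is uniformly continuous.

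Finally, I would translate uniform continuity on $Q$ into the required limit. Given $\varepsilon>0$, uniform continuity supplies $\delta>0$ such that $\lVert\xi(p)-\xi(q)\rVert<\varepsilon$ whenever $p,q\in Q$ are within distance $\delta$ in the product (say, maximum) metric. Hence, for every $t'\in[t,t+(\delta\wedge\eta))$, every $x'\in\X$ with $\lVert x'-x\rVert<\delta\wedge\rho$, and every $z'\in[z,\tilde z]$, the points $(t',x',z')$ and $(t,x,z')$ both lie in $Q$ and are within $\delta$ of one another, so $\lVert\xi(t',x',z')-\xi(t,x,z')\rVert<\varepsilon$. Taking the supremum over $z'\in[z,\tilde z]$ and then letting $t'\downarrow t$, $x'\to x$ yields $\sup_{z'\in[z,\tilde z]}\lVert\xi(t',x',z')-\xi(t,x,z')\rVert\to0$, which is precisely Assumption \ref{a3}(ii). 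I do not anticipate a genuine obstacle here; the only points needing a little care are staying inside the domains, namely using the right-openness of $\mathbb{Z}$ to ensure $[z,\tilde z]\subset\mathbb{Z}$, and handling the one-sided state space $\X=(0,+\infty)$ by shrinking $\rho$ so that the spatial neighbourhood is compact.
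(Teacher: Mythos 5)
Your proof is correct and follows essentially the same route as the paper's: restrict to a compact neighbourhood of $(t,x)$ in $[t,\tilde t]\times\X$ times the interval $[z,\tilde z]$, invoke uniform continuity of $\xi$ on that compact set, and translate the resulting $\varepsilon$--$\delta$ estimate into the uniform-in-$z'$ convergence. Your extra care in choosing $\rho<x$ when $\X=(0,+\infty)$ so that the spatial neighbourhood stays compact inside $\X$ is a minor tidying of a point the paper glosses over.
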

\begin{proof}
See Appendix \ref{appRe3.1}.
\end{proof}

In the following, we always consider sufficiently small $\epsilon$, and $o(1)$ denotes a generic
function of $\epsilon$ such that $\lim\limits_{\epsilon\downarrow0}|o(1)|=0$. 
The next theorem characterizes equilibrium strategies. 

\begin{theorem}\label{sn}
	Suppose that Assumptions \ref{a1}, \ref{a2} and \ref{a3} hold. Then, for any $(t,x)\in[0,T)\times\X$, $\ba\in\mathcal{U}$ and $\delta\geq0$, we have 
	\begin{equation}\label{expansion0}
			\begin{split}
			&\mathbb{E}\left[K(X^{\bu_{t,\epsilon,\ba}}(T),J(t,x;\hat{\bu})+\delta\epsilon)|X^{\bu_{t,\epsilon,\ba}}(t)=x\right]\\=&\epsilon\left(\A^{\ba}f(t,x,J(t,x;\hat{\bu}))+\delta f_{z}(t,x,J(t,x;\hat{\bu}))\right)+\epsilon o(1).
		\end{split}
	\end{equation}
Moreover, $\hat{\bu}$ is an equilibrium strategy if and only if 
\begin{equation}\label{hjb}
\A^{u}f(t,x,J(t,x;\hat{\bu}))\leq0,\quad \forall u\in\mathbb{U},(t,x)\in[0,T)\times\X.
\end{equation}
\end{theorem}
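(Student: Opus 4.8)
The plan is to establish the asymptotic expansion \eqref{expansion0} first, and then to deduce the equilibrium characterization \eqref{hjb} from it via Lemma \ref{lemma1}. For the expansion, fix $(t,x)$, $\ba\in\U$ and $\delta\geq0$, write $J:=J(t,x;\hat{\bu})$ and $z_\epsilon:=J+\delta\epsilon$, and split the conditional expectation over the interval $[t,t+\epsilon)$ where $\bu_{t,\epsilon,\ba}$ coincides with $\ba$, and $[t+\epsilon,T]$ where it coincides with $\hat{\bu}$. The key observation is that by the Markov property, conditioning at time $t+\epsilon$, we have
\begin{equation*}
\mathbb{E}\left[K(X^{\bu_{t,\epsilon,\ba}}(T),z_\epsilon)\,\big|\,X^{\bu_{t,\epsilon,\ba}}(t)=x\right]
=\mathbb{E}\left[f(t+\epsilon, X^{\ba}(t+\epsilon), z_\epsilon)\,\big|\,X^{\ba}(t)=x\right],
\end{equation*}
where $X^{\ba}$ solves \eqref{problem} with control $\ba$ started from $x$ at time $t$; this is legitimate because after time $t+\epsilon$ the dynamics are governed by $\hat\bu$, which is exactly what $f=f^{\hat\bu}$ encodes. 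Next, I would apply It\^o's formula to $f(\cdot,\cdot,z_\epsilon)$ — which is $C^{1,2}$ on $[t,\tilde t]\times\X$ by Assumption \ref{a3}(i) for $\epsilon$ small — on the interval $[t,t+\epsilon]$ along $X^{\ba}$, so that
\begin{equation*}
\mathbb{E}\left[f(t+\epsilon,X^{\ba}(t+\epsilon),z_\epsilon)\right]
= f(t,x,z_\epsilon) + \mathbb{E}\int_t^{t+\epsilon}\A^{\ba}f(s,X^{\ba}(s),z_\epsilon)\,ds,
\end{equation*}
the stochastic integral having zero expectation thanks to the polynomial-growth bounds on $f_x$ from Assumption \ref{a3}(i) combined with the moment estimate Lemma \ref{lemmaonextimation} (applied to $X^{\ba}$, $\ba$ being admissible).

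Having reached this integral representation, the remaining task is to show the two terms produce exactly $\epsilon\big(\A^{\ba}f(t,x,J)+\delta f_z(t,x,J)\big)+\epsilon o(1)$. For the first term, $f(t,x,z_\epsilon)=f(t,x,J+\delta\epsilon)=f(t,x,J)+\delta\epsilon f_z(t,x,J)+o(\epsilon)$ by differentiability of $f$ in $z$ (Assumption \ref{a3}(iii)), and $f(t,x,J)=0$ by definition of $J$; so this contributes $\delta\epsilon f_z(t,x,J)+\epsilon o(1)$. For the integral term, I would show $\frac1\epsilon\mathbb{E}\int_t^{t+\epsilon}\A^{\ba}f(s,X^{\ba}(s),z_\epsilon)\,ds\to\A^{\ba}f(t,x,J)$ as $\epsilon\downarrow 0$. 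This is where Assumption \ref{a3}(ii) does the work: it gives joint right-continuity at $(t,x)$ of $f_t,f_x,f_{xx}$ uniformly in the $z$-variable over a right-neighborhood $[J,\tilde z]$ of $J$, so that $\A^{\ba}f(s,y,z')$ is close to $\A^{\ba}f(t,x,z')$ uniformly in $z'$ when $(s,y)$ is near $(t,x)$; then one uses the right-continuity of $\mu^{\ba},\sigma^{\ba}$ at $t$ (Definition \ref{d1}(iii)), the continuity of $X^{\ba}(s)\to x$ in $L^p$ as $s\downarrow t$ (again Lemma \ref{lemmaonextimation}), and $z_\epsilon\downarrow J$, to pass to the limit inside the expectation. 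Uniform integrability needed for this interchange follows from the polynomial-growth bounds on $f_t,f_x,f_{xx}$ and the uniform-in-$s$ moment bounds on $X^{\ba}(s)$.

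Finally, I would combine \eqref{expansion0} with Lemma \ref{lemma1}. If \eqref{hjb} holds, then for any $\ba\in\D\subseteq\U$ and any $\delta>0$, \eqref{expansion0} gives the left side of \eqref{lemmaieq1} equal to $\epsilon\big(\A^{\ba}f(t,x,J)+\delta f_z(t,x,J)\big)+\epsilon o(1)$; since $\A^{\ba}f(t,x,J)\leq 0$ and $f_z(t,x,J)<0$ (Assumption \ref{a3}(iii)), the bracket is $\leq \delta f_z(t,x,J)<0$, which stays negative after absorbing the $o(1)$ for $\epsilon$ small, so \eqref{lemmaieq1} holds and $\hat\bu$ is an equilibrium. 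Conversely, suppose \eqref{hjb} fails: $\A^{u_0}f(t_0,x_0,J)>0$ for some constant control $u_0\in\mathbb{U}\subseteq\D$ and some $(t_0,x_0)$. Taking $\ba\equiv u_0$ and $\delta>0$ small enough that $\A^{u_0}f(t_0,x_0,J)+\delta f_z(t_0,x_0,J)>0$, \eqref{expansion0} shows the left side of \eqref{lemmaieq1} is $\epsilon\cdot(\text{positive})+\epsilon o(1)>0$ for all small $\epsilon$, violating \eqref{lemmaieq1}, hence $\hat\bu$ is not an equilibrium by Lemma \ref{lemma1}. The main obstacle is the limit $\frac1\epsilon\mathbb{E}\int_t^{t+\epsilon}\A^{\ba}f(s,X^{\ba}(s),z_\epsilon)\,ds\to\A^{\ba}f(t,x,J)$: handling simultaneously the perturbation of the starting point $X^{\ba}(s)\to x$, the time right-continuity of the frozen coefficients at $t$, and the shrinking shift $z_\epsilon\to J$, while keeping everything uniformly integrable — this is precisely why Assumption \ref{a3}(ii)–(iii) and the estimates of Lemma \ref{lemmaonextimation} are formulated the way they are.
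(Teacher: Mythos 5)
Your proposal is correct and follows essentially the same route as the paper: the Markov/tower decomposition at $t+\epsilon$, It\^o's formula with the stochastic integral killed by the polynomial-growth bounds and Lemma \ref{lemmaonextimation}, dominated convergence via Assumption \ref{a3}(i)--(iii) to identify the $\epsilon\,\A^{\ba}f(t,x,J)$ term, the $z$-derivative expansion producing $\delta\epsilon f_z$, and then Lemma \ref{lemma1} for the equivalence. The only cosmetic difference is that you prove the necessity of \eqref{hjb} by contraposition with a fixed small $\delta$, whereas the paper argues directly and lets $\delta\downarrow 0$; these are logically interchangeable.
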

\begin{proof} { Let $t\in[0,T)$ and $\tilde{t}\in(t,T)$.}
	By Assumption \ref{a3}(i), $f_{x}(t',x',z)$ is of polynomial growth in $x'\in\X$, local-uniformly in $z\in\mathbb{Z}$ and uniformly in $t'\in[t,\tilde{t}]$. Then, by Condition (ii) in Definition \ref{d1} and the estimation  in Lemma \ref{lemmaonextimation}, we have 
	\begin{equation}\label{key1}
		\mathbb{E}\left[\int_{t}^{\tilde{t}}\Vert f_{x}(s,X^{\ba}(s),z)\sigma^{\ba}(s,X^{\ba}(s))\Vert^{2}ds\bigg|X^{\ba}(t)=x\right]<+\infty,\quad \forall (x,z)\in\X\times\mathbb{Z}.
	\end{equation}
	
Similarly, we conclude from Assumption \ref{a3}(i) and Condition (ii) in Definition \ref{d1} that $\A^{\ba}f(t',x',z)$ is of polynomial growth in $x'\in\X$, local-uniformly in $z\in\mathbb{Z}$ and uniformly in $t'\in[t,\tilde{t}]$. As a result, { for any $[z,\tilde{z}]\subseteq\mathbb{Z}$ with $z<\tilde{z}$,} there exist constants $L>0$ and $L'>0$ and an integer $\gamma\geq1$ such that,  
	\begin{equation}\label{dct1}
		\begin{split}
			&\mathbb{E}\big[\sup\limits_{s\in[t,\tilde{t}]}\sup\limits_{z'\in[z,\tilde{z}]}\vert \A^{\ba}f(s,X^{\ba}(s),z')\vert\big|X^{\ba}(t)=x\big]\\\leq& L\mathbb{E}\big[\sup\limits_{s\in[t,\tilde{t}]}\left(1+ \varphi_{2\gamma}\left(X^{\ba}(s)\right)\right)\big|X^{\ba}(t)=x\big]\leq L'\left(1+ \varphi_{2\gamma}\left(x\right)\right),\quad\forall\,x\in\X,
		\end{split} 
	\end{equation}
where $\varphi_{2\gamma}$ is defined by (\ref{poly}).
	
{ Now we prove \eqref{expansion0}. Let $\delta\ge0$.  For any sufficiently small} $\epsilon\in\left(0,\tilde{t}-t\right)$, applying It\^{o}'s lemma to $f(s,X^{\ba}(s),z+\delta\epsilon)$,  $s\in[t,t+\epsilon]$, taking expectation, and recalling  Fubini's theorem, we conclude 
	\begin{equation}\label{ito}
		\begin{split}
&\mathbb{E}\left[f(t+\epsilon,X^{\ba}(t+\epsilon),z+\delta\epsilon)|X^{\ba}(t)=x\right]-f(t,x,z+\delta\epsilon)\\=&\int_{t}^{t+\epsilon}\mathbb{E}\left[\A^{\ba}f(s,X^{\ba}(s),z+\delta\epsilon)\bigg|X^{\ba}(t)=x\right]ds,
		\end{split}
	\end{equation}
where the expectation of the local martingale term is zero because, by  (\ref{key1}), the local martingale is a square-integrable  martingale. By Assumption \ref{a3}(i)-(ii), the continuity of $X^{\ba}$ with $X^{\ba}(t)=x$ and Conditions (i) and (iii) in Definition \ref{d1}, we have that $\A^{\ba}f(s,X^{\ba}(s),z')$ converges to $\A^{\ba}f(t,x,z')$ as $s\downarrow t$, uniformly in $z'\in[z,\tilde{z}]$. Then, according to (\ref{dct1}) and the dominated convergence theorem, we obtain 
\begin{equation}\label{key2}
\lim\limits_{s\downarrow t}\mathbb{E}\left[\sup\limits_{z'\in[z,\tilde{z}]}\big|\A^{\ba}f(s,X^{\ba}(s),z')-\A^{\ba}f(t,x,z')\big|\bigg|X^{\ba}(t)=x\right]=0.
\end{equation}
The above, together with (\ref{ito}) and Condition (iii) in Assumption \ref{a3}, yields 
\begin{equation}\label{expansion}
\mathbb{E}\left[f(t+\epsilon,X^{\ba}(t+\epsilon),z+\delta\epsilon)|X^{\ba}(t)=x\right]-f(t,x,z+\delta\epsilon)=\epsilon\A^{\ba}f(t,x,z)+\epsilon o(1).
\end{equation}
Consequently, when $\delta\geq0$, we have
\begin{equation*}
\begin{split}
&\mathbb{E}\left[K(X^{\bu_{t,\epsilon,\ba}}(T),J(t,x;\hat{\bu})+\delta\epsilon)|X^{\bu_{t,\epsilon,\ba}}(t)=x\right]\\=&\mathbb{E}\left[\mathbb{E}\big[K(X^{\bu_{t,\epsilon,\ba}}(T),J(t,x;\hat{\bu})+\delta\epsilon)|X^{\bu_{t,\epsilon,\ba}}(t+\epsilon)\big]|X^{\bu_{t,\epsilon,\ba}}(t)=x\right]\\=&\mathbb{E}\left[f(t+\epsilon,X^{\bu_{t,\epsilon,\ba}}(t+\epsilon),J(t,x;\hat{\bu})+\delta\epsilon)|X^{\ba}(t)=x\right]\\=&\mathbb{E}\left[f(t+\epsilon,X^{\ba}(t+\epsilon),J(t,x;\hat{\bu})+\delta\epsilon)|X^{\ba}(t)=x\right]\\
=&\mathbb{E}\left[f(t+\epsilon,X^{\ba}(t+\epsilon),J(t,x;\hat{\bu})+\delta\epsilon)|X^{\ba}(t)=x\right]-f(t,x,J(t,x;\hat{\bu})+\delta\epsilon)\\
&+f(t,x,J(t,x;\hat{\bu})+\delta\epsilon)-f(t,x,J(t,x;\hat{\bu}))\\=&\epsilon\A^{\ba}f(t,x,J(t,x;\hat{\bu}))+\epsilon\delta f_{z}(t,x,J(t,x;\hat{\bu}))+\epsilon o(1),
\end{split}
\end{equation*}
where the first equality is the case because of the Markov property of the state process and the tower property of conditional expectation,  the second equality is the case because $\bu_{t,\epsilon,\ba}=\hat{\bu}$ on $[t+\epsilon, T]$, the third is the case because $\bu_{t,\epsilon,\ba}=\ba$ on $[t, t+\epsilon)$ and the fourth is the case because $f(t,x,J(t,x;\hat{\bu}))=0$.

Now we prove the second part of the theorem. 
Assume that $\hat{\bu}$ is an equilibrium strategy. Then, 
by \eqref{expansion0} and Lemma \ref{lemma1}, for any $(t,x)\in[0,T)\times\X$, $\ba\in\D$ and $\delta>0$, we have
\begin{equation*}
\begin{split}
0\ge&\lim\limits_{\epsilon\downarrow0}\frac{1}{\epsilon}\mathbb{E}\left[K(X^{\bu_{t,\epsilon,\ba}}(T),J(t,x;\hat{\bu})+\delta\epsilon)|X^{\bu_{t,\epsilon,\ba}}(t)=x\right]\\
=&\A^{\ba}f(t,x,J(t,x;\hat{\bu}))+\delta f_{z}(t,x,J(t,x;\hat{\bu}))\\
=&\A^{\ba(t,x)}f(t,x,J(t,x;\hat{\bu}))+\delta f_{z}(t,x,J(t,x;\hat{\bu})).
	\end{split}
\end{equation*}
Letting  $\delta  \rightarrow 0$, we obtain
\begin{equation*}
	\A^{\ba(t,x)}f(t,x,J(t,x;\hat{\bu}))\leq0, \quad (t,x)\in[0,T)\times\X,\, \ba\in\D,
\end{equation*}
which yields (\ref{hjb}), by $\mathbb{U}\subseteq\D$. 

Conversely, assume that (\ref{hjb}) holds. Then, from (\ref{expansion0}) we obtain  for any $(t,x)\in[0,T)\times\X$, $a\in\D$ and $\delta>0$, 
\begin{equation*}
\begin{split}
&\mathbb{E}\left[K(X^{\bu_{t,\epsilon,\ba}}(T),J(t,x;\hat{\bu})+\delta\epsilon)|X^{\bu_{t,\epsilon,\ba}}(t)=x\right]\\
=&\epsilon\left(\A^{\ba(t,x)}f(t,x,J(t,x;\hat{\bu}))+\delta f_{z}(t,x,J(t,x;\hat{\bu}))\right)+\epsilon o(1)\\ \leq&\epsilon\left(\delta f_{z}(t,x,J(t,x;\hat{\bu}))+o(1)\right).
\end{split}
\end{equation*}
Hence, by $f_{z}(t,x,J(t,x;\hat{\bu}))<0$, there exists some $\epsilon_{0}\in(0,T-t)$ such that, for all $\epsilon\in(0,\epsilon_{0})$,
\begin{equation*}
	\mathbb{E}\left[K(X^{\bu_{t,\epsilon,\ba}}(T),J(t,x;\hat{\bu})+\delta\epsilon)|X^{\bu_{t,\epsilon,\ba}}(t)=x\right]\leq0.
\end{equation*}
Then, by Lemma \ref{lemma1}, $\hat{\bu}$ is an equilibrium strategy.
\end{proof}

\begin{remark}\label{remarkhjb}
Note that, under Assumption \ref{a2}, the characterization (\ref{hjb}) of equilibrium strategies does not depend on the choice of $\mathcal{D}$. 
    Considering the case $\mathcal{D}=\U$ and $\ba=\hat{\bu}$, we have
\begin{equation*}
	\mathbb{E}\left[F(X^{\bu_{t,\epsilon,\ba}}(T),J(t,x;\hat{\bu}))|X^{\bu_{t,\epsilon,\ba}}(t)=x\right]=\mathbb{E}\left[F(X^{\hat{\bu}}(T),J(t,x;\hat{\bu}))|X^{\hat{\bu}}(t)=x\right]=0.
\end{equation*}
Then, plugging it into (\ref{expansion0}) with $\delta=0$, we have
\begin{equation*}
\A^{\hat{\bu}(t,x)}f(t,x,J(t,x;\hat{\bu}))=\A^{\hat{\bu}}f(t,x,J(t,x;\hat{\bu}))=0.
\end{equation*}
Therefore, under Assumption \ref{a2}, Condition (\ref{hjb}) is equivalent to 
\begin{equation}\label{hjb2}
    \max\limits_{u\in\mathbb{U}}\A^{u}f(t,x,J(t,x;\hat{\bu}))=\A^{\hat{\bu}(t,x)}f(t,x,J(t,x;\hat{\bu}))=0,\quad (t,x)\in[0,T)\times\X.
\end{equation}
\end{remark}

\section{Applications}\label{sec:example}
In this section, we apply the main result in Section \ref{sec:suffandness} to investigate two portfolio selection problems for the CRRA betweenness preferences. 

\subsection{CRRA betweenness preferences}

Let the objective function $J(t,x;\bu)$ be implicitly defined by 
\begin{equation}\label{G1}
	\mathbb{E}\left[F\left(\frac{X^{\bu}(T)}{J(t,x;\bu)}\right)\bigg|X^{\bu}(t)=x\right]=0,
\end{equation}
where $F$ is a function satisfying the following assumption.

\begin{assumption}\label{a31}
$F:\left(0,+\infty\right)\rightarrow\mathbb{R}$ is twice continuously differentiable, $F(1)=0$, $F'(x)>0$ and $F''(x)<0$ for all $x\in\left(0,+\infty\right)$. Moreover, $F$, $F'$ and $F''$ are of polynomial growth in  $x\in\left(0,+\infty\right)$, i.e., there exist $L>0$ and  $\gamma>0$ such that 
$$\lvert F(x)\rvert+\lvert F'(x)\rvert+\lvert F''(x)\rvert\leq L\left(1+x^{\gamma}+ x^{-\gamma}\right),\quad x\in\left(0,+\infty\right).$$ 
\end{assumption}

We can see that (\ref{G1}) is a special case of (\ref{F}) with $K(x,z)=F(x/z)$, $(x,z)\in(0,+\infty)^2$. Obviously, the function $K$ given by this way satisfies Assumption \ref{a1}. The preference given by (\ref{G1}) has both of the betweenness property of \cite{Chew-1983,Chew-1989,Dekel-1986} and the positive homogeneity. Therefore, it is called a \emph{CRRA betweeness preference} (c.f. \cite{Backbook2017}). 

It is easy to see that any continuous state-independent strategy, i.e., any strategy $\bu$ satisfying $\bu(t,x)=v(t),\, (t,x)\in[0,T]\times(0,+\infty)$, with $v\in C([0,T])$, is admissible. 

Let us recall three important functions in \cite{liang2023dynamic}, the wellposedness of which has already been established there. Assume a random variable $\xi\sim\mathcal{N}(0,1)$.  Here and hereafter, $\mathcal{N}(0,1)$ denotes the standard normal distribution.

The first function $H:[0,+\infty)\rightarrow(0,+\infty)$ is implicitly defined by 
\begin{equation}
	\mathbb{E}\left[F\left(\frac{e^{\sqrt{y}\xi}}{H(y)}\right)\right]=0,\quad y\geq0.
\end{equation}
Obviously, $H$ is continuous and $H(0)=1$. Moreover, the following lemma shows  $H\in C^{1}[0,+\infty)$.  
\begin{lemma}\label{pH}
Under Assumption \ref{a31},	 the function $H$ is continuously differentiable on $[0,+\infty)$.
\end{lemma}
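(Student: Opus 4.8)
The plan is to express $H$ through the implicit function theorem. Define
\[
\Phi(y,z):=\mathbb{E}\!\left[F\!\left(\frac{e^{\sqrt{y}\,\xi}}{z}\right)\right],\qquad (y,z)\in[0,+\infty)\times(0,+\infty),
\]
so that, by definition, $\Phi(y,H(y))=0$ and $H(0)=1$. Since $e^{\sqrt{y}\,\xi}$ is lognormal we have $\mathbb{E}[e^{p\sqrt{y}\,|\xi|}]<\infty$ for every $p$, and the polynomial-growth bounds on $F$, $F'$, $F''$ in Assumption \ref{a31} show that $\Phi$ and all the candidate derivatives appearing below are finite and admit, locally uniformly in $(y,z)$, dominating functions of $\xi$ that are integrable against the standard Gaussian density. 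Hence a routine differentiation-under-the-expectation argument gives $\Phi\in C^{1}\big((0,+\infty)\times(0,+\infty)\big)$ with
\[
\Phi_{z}(y,z)=-\frac{1}{z^{2}}\,\mathbb{E}\!\left[e^{\sqrt{y}\,\xi}\,F'\!\left(\frac{e^{\sqrt{y}\,\xi}}{z}\right)\right]<0,
\]
using $F'>0$. In particular $\Phi_{z}$ never vanishes, so the classical implicit function theorem already yields $H\in C^{1}\big((0,+\infty)\big)$ with $H'(y)=-\Phi_{y}(y,H(y))/\Phi_{z}(y,H(y))$. The whole content of the lemma is therefore the behaviour at the endpoint $y=0$.

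The key step will be to rewrite $\Phi_{y}$ so that the apparent singularity $\tfrac{1}{2\sqrt{y}}$ coming from $\partial_{y}e^{\sqrt{y}\,\xi}=\tfrac{\xi}{2\sqrt{y}}e^{\sqrt{y}\,\xi}$ cancels. For $y>0$,
\[
\Phi_{y}(y,z)=\frac{1}{2\sqrt{y}}\,\mathbb{E}\!\left[\xi\,g_{y,z}(\xi)\right],\qquad g_{y,z}(\xi):=\frac{e^{\sqrt{y}\,\xi}}{z}\,F'\!\left(\frac{e^{\sqrt{y}\,\xi}}{z}\right).
\]
The map $g_{y,z}$ is absolutely continuous and, together with $g_{y,z}'$, grows at most exponentially in $|\xi|$ (again by the polynomial-growth hypothesis), so Gaussian integration by parts $\mathbb{E}[\xi\,g_{y,z}(\xi)]=\mathbb{E}[g_{y,z}'(\xi)]$ is valid; computing
\[
g_{y,z}'(\xi)=\sqrt{y}\,e^{\sqrt{y}\,\xi}\left(\frac{1}{z}F'\!\Big(\tfrac{e^{\sqrt{y}\,\xi}}{z}\Big)+\frac{e^{\sqrt{y}\,\xi}}{z^{2}}F''\!\Big(\tfrac{e^{\sqrt{y}\,\xi}}{z}\Big)\right)
\]
would give, for every $y>0$,
\[
\Phi_{y}(y,z)=\frac12\,\mathbb{E}\!\left[e^{\sqrt{y}\,\xi}\left(\frac{1}{z}F'\!\Big(\tfrac{e^{\sqrt{y}\,\xi}}{z}\Big)+\frac{e^{\sqrt{y}\,\xi}}{z^{2}}F''\!\Big(\tfrac{e^{\sqrt{y}\,\xi}}{z}\Big)\right)\right].
\]
The right-hand side is finite for all $(y,z)\in[0,+\infty)\times(0,+\infty)$ and, by dominated convergence with the same type of Gaussian dominating functions, depends continuously on $(y,z)$ there; this is exactly where the continuity and polynomial growth of $F''$ enter. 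The same reasoning shows that $\Phi_{z}$ extends continuously to $y=0$ as well.

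I would then finish with the standard consequence of the mean value theorem: $H$ is continuous on $[0,+\infty)$ with $H(0)=1$, is $C^{1}$ on $(0,+\infty)$, and, using the continuity of $H$ at $0$ together with the joint continuity of $\Phi_{y}$ and $\Phi_{z}$ up to $y=0$ established above,
\[
H'(y)=-\frac{\Phi_{y}(y,H(y))}{\Phi_{z}(y,H(y))}\;\longrightarrow\;-\frac{\Phi_{y}(0,1)}{\Phi_{z}(0,1)}=\frac{F'(1)+F''(1)}{2F'(1)}\qquad (y\downarrow 0),
\]
which is finite since $\Phi_{z}(0,1)=-F'(1)\neq 0$. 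A function that is continuous on $[0,+\infty)$, differentiable on $(0,+\infty)$, and whose derivative has a finite limit at $0$ is differentiable at $0$ with that limit as its derivative, and has continuous derivative there; hence $H\in C^{1}[0,+\infty)$. The only genuinely delicate point is the cancellation of the $1/\sqrt{y}$ factor via integration by parts; all the remaining estimates are routine domination against Gaussian moments, for which the polynomial-growth part of Assumption \ref{a31} is designed.
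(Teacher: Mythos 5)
Your proposal is correct, and it shares the paper's overall skeleton: define $\Phi(y,z)=\mathbb{E}[F(e^{\sqrt{y}\xi}/z)]$ (the paper's $h$), check $\Phi_z<0$, apply the implicit function theorem away from $y=0$, and then deal separately with the endpoint where the naive formula for $\Phi_y$ carries the factor $\tfrac{1}{2\sqrt{y}}$. The difference lies in how that delicate point is resolved. The paper keeps everything at the level of $h$: it applies the mean value theorem twice — once inside the expectation in the $y$-variable to rewrite $h_y(y^2,z')$ as $\tfrac12\,\mathbb{E}\bigl[F''(\cdot)(\cdot)^2\xi^2+F'(\cdot)(\cdot)\xi^2\bigr]$ at an intermediate point and identify $\lim_{(y,z')\to(0,z)}h_y(y,z')$, and once more on $y\mapsto h(y,z)$ to show this limit is the genuine partial derivative $h_y(0,z)$ — thereby obtaining $h\in C^1([0,+\infty)\times(0,+\infty))$ and invoking the implicit function theorem up to the boundary. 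You instead use Gaussian integration by parts (Stein's identity) to cancel the $1/\sqrt{y}$ factor exactly for every $y>0$, yielding the closed-form $\Phi_y(y,z)=\tfrac12\,\mathbb{E}\bigl[e^{\sqrt{y}\xi}\bigl(\tfrac1z F'(\cdot)+\tfrac{e^{\sqrt{y}\xi}}{z^2}F''(\cdot)\bigr)\bigr]$, which is manifestly continuous up to $y=0$; you then transfer the endpoint question to $H$ itself via the standard fact that a continuous function whose derivative has a finite limit at the endpoint is $C^1$ there. Your route avoids proving that $\Phi$ is differentiable in $y$ at $y=0$ (you never need $\Phi_y(0,z)$ as an actual derivative, only the limit of $H'$), and the integration-by-parts identity is exact rather than passing through intermediate points; the paper's route is more elementary (no Stein identity) and delivers the slightly stronger conclusion $h\in C^1$ on the closed parameter domain, which is what feeds directly into its implicit-function-theorem statement. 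Your limiting value $H'(0^+)=\frac{F'(1)+F''(1)}{2F'(1)}$ agrees with the paper's formulas, and the domination arguments you invoke (exponential growth in $|\xi|$ against the Gaussian density, coming from the polynomial growth of $F'$, $F''$ and the smoothness of $F$ needed for the absolute continuity of $g_{y,z}$) are all available under Assumption \ref{a31}, so I see no gap.
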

\begin{proof} See Appendix \ref{app:pH}.
\end{proof}

The second function $G:[0,+\infty)\rightarrow(0,+\infty)$ is defined by
\begin{equation}\label{g}
	G(y):=\frac{H(y)\mathbb{E}\left[e^{\sqrt{y}\xi}F'(e^{\sqrt{y}\xi}/H(y))\right]}{-\mathbb{E}\left[e^{2\sqrt{y}\xi}F''(e^{\sqrt{y}\xi}/H(y))\right]},\quad y\geq0,
\end{equation}
which is continuous on $[0,+\infty)$. 

\begin{remark}\label{Gbound}
If the index of relative risk aversion of $F$ has a strictly positive lower bound, i.e., there exists $c>0$ such that
\begin{equation*}
    -\frac{xF''(x)}{F'(x)}\geq c,\quad x\in(0,+\infty),
\end{equation*}
then we can see that $G\leq\frac{1}{c}$.    
\end{remark}

The following lemma presents a sufficient condition for $G$ to be continuously differentiable on $[0,+\infty)$, and hence, locally Lipschitz.

\begin{lemma}\label{sufflip}
   Suppose that Assumption \ref{a31} holds. If $F$  is four times continuously differentiable and $F^{(3)}$ and $F^{(4)}$ are of polynomial growth in  $x\in\left(0,+\infty\right)$, i.e., there exist $L>0$ and $\gamma>0$ such that 
$$ \lvert F^{(3)}(x)\rvert+\lvert F^{(4)}(x)\rvert\leq L\left(1+x^{\gamma}+ x^{-\gamma}\right),\quad x\in\left(0,+\infty\right),$$ 
then  $G$ is continuously differentiable on $[0,+\infty)$, hence, is locally Lipschitz.
\end{lemma}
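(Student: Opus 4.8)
The plan is to differentiate the explicit formula (\ref{g}) for $G$ under the expectation sign and to show that each of the resulting expectations is continuous in $y\in[0,+\infty)$, so that $G'$ exists and is continuous. Write $G(y)=N(y)/D(y)$ with $N(y):=H(y)\,\mathbb{E}\!\left[e^{\sqrt y\,\xi}F'(e^{\sqrt y\,\xi}/H(y))\right]$ and $D(y):=-\mathbb{E}\!\left[e^{2\sqrt y\,\xi}F''(e^{\sqrt y\,\xi}/H(y))\right]$. By Assumption \ref{a31} we have $F''<0$, so $D(y)>0$ for every $y$, and $D$ is continuous on $[0,+\infty)$ (already noted for $G$); hence it suffices to prove that $N$ and $D$ are $C^1$ on $[0,+\infty)$. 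The quotient rule then gives $G\in C^1$, and a $C^1$ function on $[0,+\infty)$ is locally Lipschitz, which is the final claim.

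First I would record the regularity input: by Lemma \ref{pH}, $H\in C^1[0,+\infty)$, and with $H(0)=1$ and $H>0$ the map $y\mapsto H(y)$ and $y\mapsto 1/H(y)$ are both $C^1$. The substitution $u=\sqrt y$ is the standard device for handling the non-smoothness of $\sqrt y$ at $0$: set $\Phi(u):=\mathbb{E}\!\left[e^{u\xi}F'(e^{u\xi}/H(u^2))\right]$ and $\Psi(u):=\mathbb{E}\!\left[e^{2u\xi}F''(e^{u\xi}/H(u^2))\right]$ for $u\in\mathbb{R}$, so that $N(y)=H(y)\Phi(\sqrt y)$ and $D(y)=-\Psi(\sqrt y)$. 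If I can show $\Phi,\Psi\in C^1(\mathbb{R})$ with $\Phi'(0)=\Psi'(0)=0$ (the latter by the symmetry $\xi\stackrel{d}{=}-\xi$, which forces $\Phi,\Psi$ to be even functions of $u$, whence their derivatives vanish at $0$), then $u\mapsto\Phi(u)$ and $u\mapsto\Psi(u)$ descend through $u=\sqrt y$ to $C^1$ functions of $y$ on $[0,+\infty)$, one-sided at $0$; composing and multiplying with the $C^1$ function $H$ finishes the argument.

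The core step is therefore differentiation under the expectation for $\Phi$ and $\Psi$. Differentiating the integrand of $\Phi$ in $u$ produces, via the chain and product rules, terms of the form $\xi e^{u\xi}F'(\cdot)$, $e^{2u\xi}F''(\cdot)$, and $e^{u\xi}F''(\cdot)\cdot\bigl(-H'(u^2)2u/H(u^2)^2\bigr)$; differentiating the integrand of $\Psi$ additionally produces a term with $F^{(3)}$. Here is where the extra hypothesis is used: the polynomial-growth bounds on $F',F'',F^{(3)},F^{(4)}$ from Assumption \ref{a31} and the statement, together with the fact that $e^{u\xi}$ has Gaussian moments of every order and that on any compact $u$-interval $H,H',1/H$ are bounded, give an integrable (in $\xi$, locally uniformly in $u$) dominating function for the differentiated integrand. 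The classical theorem on differentiation under the integral sign then yields $\Phi\in C^1$; the same argument with one more derivative — now invoking $F^{(4)}$ — shows that $\Phi'$ and $\Psi'$ are themselves continuous, or one simply reapplies the domination lemma to the once-differentiated integrand to get continuity directly. I expect the main obstacle to be purely bookkeeping: assembling the dominating functions for $\Psi'$, since that integrand contains the highest derivative $F^{(3)}$ multiplied by up to $e^{3u\xi}$-type factors and by $u$-dependent coefficients built from $H,H'$, so one must check carefully that the polynomial-times-Gaussian bound survives on compacts in $u$. No genuinely new idea is needed beyond the $u=\sqrt y$ reparametrization and the evenness-at-$0$ observation; the growth conditions are tailored precisely so that dominated convergence applies.
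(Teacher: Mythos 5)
Your interior argument (differentiation under the expectation for $y>0$, dominated by polynomial growth of $F',F'',F^{(3)}$ times Gaussian moments) is exactly the paper's argument for the continuity of the derivatives on $(0,+\infty)$, and writing $G=N/D$ with $D>0$ is also how the paper reduces to the two expectations $g_1,g_2$. The gap is at $y=0$, which is where the real content of the lemma lies. Your reduction claims that if $\Phi(u)=\mathbb{E}\bigl[e^{u\xi}F'(e^{u\xi}/H(u^2))\bigr]$ and $\Psi(u)=\mathbb{E}\bigl[e^{2u\xi}F''(e^{u\xi}/H(u^2))\bigr]$ are even, $C^1(\mathbb{R})$, with $\Phi'(0)=\Psi'(0)=0$, then $y\mapsto\Phi(\sqrt y)$ and $y\mapsto\Psi(\sqrt y)$ are $C^1$ on $[0,+\infty)$. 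That implication is false: take $\Phi(u)=|u|^{3/2}$, which is even, $C^1$, with $\Phi'(0)=0$, yet $\Phi(\sqrt y)=y^{3/4}$ is not differentiable at $y=0$. For $y>0$ one has $\frac{d}{dy}\Phi(\sqrt y)=\Phi'(\sqrt y)/(2\sqrt y)$, so to get a (one-sided) $C^1$ extension to $y=0$ you need the limit of $\Phi'(u)/u$ as $u\downarrow0$, i.e.\ second-order information on $\Phi$ and $\Psi$ at $u=0$, not merely $\Phi'(0)=0$.

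This missing second-order step is precisely where $F^{(3)}$ and $F^{(4)}$ must enter, and it is what the paper's proof supplies: it computes $\lim_{y\downarrow0}g_2'(y)$ by L'H\^opital's rule (the numerators $\mathbb{E}[e^{2\sqrt y\xi}F''(\cdot)\xi]$ vanish at $y=0$ since $\mathbb{E}[\xi]=0$), obtaining a finite limit expressed through $F''(1),F^{(3)}(1),F^{(4)}(1)$, and then uses a mean-value-theorem argument (as in the proof of Lemma \ref{pH}) to show that $g_2'(0)$ exists and equals this limit, so $g_2\in C^1[0,+\infty)$; similarly for $g_1$. In your framework the fix would be to prove $\Phi,\Psi\in C^2$ near $u=0$ (one more differentiation under the expectation, dominated using the growth of $F^{(3)},F^{(4)}$), after which evenness does give $C^1$ of the compositions; but as written you invoke $F^{(4)}$ only as an optional device for the continuity of the first derivatives and explicitly rest the conclusion on the insufficient ``$C^1$ plus evenness'' claim, so the argument at the origin does not go through without this additional step.
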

\begin{proof}
See Appendix \ref{Appsufflip}.
\end{proof}

The third function $\mathcal{G}$ is defined by 
\begin{equation}
	\mathcal{G}(x):=\int_{0}^{x}\frac{1}{(G(y))^{2}}dy,\quad x\in[0,+\infty).
\end{equation}

\subsection{Financial market model} 

For simplicity, we consider the BS financial market model. We assume that there are one risk-free asset (bank account) and $d$ risky assets (stocks) in the market. The stock price processes $\left\{S_{i}(t),i=1,2,\cdots,d,\ t\in[0,T]\right\}$ follow the dynamics
\begin{equation*}
		dS_{i}(t)=S_{i}(t)\left[\mu_{i}(t)dt+\sigma_{i}(t)\cdot dW(t)\right],\quad t\in[0,T],\, i=1,\cdots,d,
\end{equation*}
where $W$ is a standard $d$-dimensional Brownian motion, and the market coefficients $\mu:[0,T]\rightarrow\mathbb{R}^{d}$ and $\sigma:[0,T]\rightarrow\mathbb{R}^{d\times d}$ as well as the interest rate $r:[0,T]\rightarrow\mathbb{R}$ of the bank account are continuous deterministic processes. Moreover, we assume that there exist some strictly positive constants $c_{1}$ and $c_{2}$ such that 
\begin{equation}\label{c}
	c_{2}\lVert\alpha\rVert^{2}\geq\lVert\sigma^{\top}(t)\alpha\rVert^{2}\geq c_{1}\lVert\alpha\rVert^{2},\quad \forall\alpha\in\mathbb{R}^{d},\ t\in[0,T].
\end{equation}
Let $\kappa(t):=\left(\sigma(t)\right)^{-1}\mu(t)$, 
$t\in[0,T]$, which will be used later.

\subsection{Portfolio selection with a constraint}\label{example1}
In this part, we consider a portfolio selection problem with a closed convex constraint. For simplicity, we assume that $r(t)=0$, $\forall t\in [0,T]$. Let the state space $\X=(0,+\infty)$ and $\mathbb{U}$ be a nonempty closed convex subset of $\mathbb{R}^{d}$.
Corresponding to a portfolio strategy (feedback strategy) $\bu:[0,T]\times(0,+\infty)\to \mathbb{U}$,  the self-financing wealth process (state process) $X^{\bu}$ satisfies the following SDE:
\begin{equation*}
	\begin{split}
		dX^{\bu}(s)=X^{\bu}(s)\left(\bu^{\top}(s,X^{\bu}(s))\mu(s)ds+\bu^{\top}(s,X^{\bu}(s))\sigma(s)dW(s)\right),\quad
		X^{\bu}(t)=x\in(0,+\infty),
	\end{split}
\end{equation*}
 where $x$ is the agent's wealth level at time $t$,
 $X^{\bu}(s)$ is the agent's wealth at time $s$, and $\bu\left(s,X^{\bu}(s)\right)$ is the proportion of the wealth invested into the stocks at time $s$. In this setting, we have
$$\mu^\bu(t,x)=x\bu^\top(t,x)\mu(t)\text{ and } \sigma^\bu(t,x)=x\bu^\top(t,x)\sigma(t),\quad (t,x)\in[0,T)\times(0,+\infty).$$
 Some examples of $\mathbb{U}$ can be found in \cite[Examples 5.4.1]{Karatzasbook01}, including no short-selling, incomplete market, prohibition of borrowing and so on.  We denote by $P_{t}$ the  projection onto the set $\sigma^{\top}(t)\mathbb{U}:=\left\{\sigma^{\top}(t)u:u\in\mathbb{U}\right\}$, i.e., $P_{t}(x)=\arg\min\limits_{z\in\sigma^{\top}(t)\mathbb{U}}\lVert x-z\rVert$ for $x\in\mathbb{R}^{d}$.

\begin{theorem}\label{CRRA}
	Suppose that Assumptions \ref{a2} and  \ref{a31} hold. 
	Let the objective function be given by \eqref{G1}.
	If the following ODE admits a unique solution on $[0,T]$,
	\begin{equation}\label{ode0}
		\begin{cases}
			A'(t)=-\lVert P_{t}\left(\kappa(t)G(A(t))\right)\rVert^{2},\quad t\in[0,T),\\
			A(T)=0,
		\end{cases}
	\end{equation}
then the strategy
	\begin{equation}\label{hatpi}
	\hat{\bu}(t,x)=(\sigma^{\top}(t))^{-1}P_{t}\left(\kappa(t)G(A(t))\right), \quad (t,x)\in[0,T]\times(0,+\infty),
\end{equation}
is the unique continuous state-independent equilibrium strategy.
\end{theorem}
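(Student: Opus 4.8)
The plan is to apply Theorem~\ref{sn} in the equivalent form \eqref{hjb2} of Remark~\ref{remarkhjb}. Since $K(x,z)=F(x/z)$ already satisfies Assumption~\ref{a1} and Assumption~\ref{a2} is hypothesized, the preliminary work is to compute $f=f^{\hat{\bu}}$ and $J(\cdot,\cdot;\hat{\bu})$ in closed form, to verify Assumption~\ref{a3} for this $f$, and then to solve a finite--dimensional optimization. \emph{Step 1 (closed forms).} Because $\hat{\bu}$ from \eqref{hatpi} is continuous and state--independent, the wealth SDE is linear with deterministic coefficients, so conditionally on $X^{\hat{\bu}}(t)=x$ the terminal wealth is log--normal with $\log\!\big(X^{\hat{\bu}}(T)/x\big)\sim\mathcal N(\phi(t),\psi(t))$, where, using $\hat{\bu}(t,x)^{\top}\sigma(t)=P_t(\kappa(t)G(A(t)))^{\top}$ and $\hat{\bu}(t,x)^{\top}\mu(t)=P_t(\kappa(t)G(A(t)))^{\top}\kappa(t)$, one has $\psi(t)=\int_t^T\lVert P_s(\kappa(s)G(A(s)))\rVert^2\,ds$ and $\phi(t)=\int_t^T\big(P_s(\kappa(s)G(A(s)))^{\top}\kappa(s)-\tfrac12\lVert P_s(\kappa(s)G(A(s)))\rVert^2\big)\,ds$. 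The terminal condition $A(T)=0$ together with \eqref{ode0} gives $\psi\equiv A$. Introducing $\Phi(v,y):=\mathbb E\big[F(v e^{\sqrt y\,\xi})\big]$ with $\xi\sim\mathcal N(0,1)$, we get $f(t,x,z)=\Phi\big((x/z)e^{\phi(t)},A(t)\big)$; and since $F$ is strictly increasing, $\Phi(v,y)=0$ if and only if $v=1/H(y)$, so $J(t,x;\hat{\bu})=x\,e^{\phi(t)}H(A(t))$.

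\emph{Step 2 (Assumption~\ref{a3}).} From the closed form $f(t,x,z)=\Phi\big((x/z)e^{\phi(t)},A(t)\big)$ with $\phi,A\in C^1([0,T])$, $H\in C^1([0,\infty))$ and $H>0$ (Lemma~\ref{pH}), and with $\Phi$ smooth enough by differentiation under the expectation --- legitimate thanks to the polynomial growth of $F,F',F''$ in Assumption~\ref{a31}, yielding $\Phi_v=\mathbb E[e^{\sqrt y\xi}F'(ve^{\sqrt y\xi})]>0$ and $\Phi_{vv}=\mathbb E[e^{2\sqrt y\xi}F''(ve^{\sqrt y\xi})]<0$ and $\Phi_y$ finite --- one checks the required $C^{1,2}$ regularity of $f(\cdot,\cdot,z)$ and the polynomial--growth bounds on $f_t,f_x,f_{xx}$ in $x\in(0,\infty)$; the uniform--in--$z'$ convergence in Assumption~\ref{a3}(ii) follows from joint continuity via Lemma~\ref{remarktolemma}, and $f_z<0$ because $\Phi_v>0$. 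This bookkeeping is the most laborious part and is precisely where Assumption~\ref{a31} and Lemma~\ref{pH} are used; some of it would be relegated to the appendix.

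\emph{Step 3 (HJB characterization, hence existence).} Fix $(t,x)$ and set $v_0:=1/H(A(t))$, so that at $z=J(t,x;\hat{\bu})$ one has $(x/z)e^{\phi(t)}=v_0$. For a constant strategy $u\in\mathbb U$, a direct computation of $\A^u f$ using $f_x=\Phi_v\,v_0/x$, $f_{xx}=\Phi_{vv}\,v_0^2/x^2$ and $f_t=\Phi_v v_0\phi'(t)+\Phi_y A'(t)$ shows that maximizing $\A^u f(t,x,J(t,x;\hat{\bu}))$ over $u\in\mathbb U$ reduces, after the substitution $w=\sigma^{\top}(t)u$ (so $u^{\top}\mu(t)=w^{\top}\kappa(t)$ and $\lVert u^{\top}\sigma(t)\rVert^2=\lVert w\rVert^2$), to maximizing the strictly concave quadratic $w\mapsto \Phi_v v_0\,w^{\top}\kappa(t)+\tfrac12\Phi_{vv}v_0^2\lVert w\rVert^2$ over the closed convex set $\sigma^{\top}(t)\mathbb U$. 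Completing the square, the unique maximizer is $w^*=P_t(c\,\kappa(t))$ with $c=-\Phi_v(v_0,A(t))/\big(v_0\Phi_{vv}(v_0,A(t))\big)$, and this equals $G(A(t))$ exactly by the definition \eqref{g} of $G$; hence the unique maximizer in $u$ is $(\sigma^{\top}(t))^{-1}P_t(\kappa(t)G(A(t)))=\hat{\bu}(t,x)$. Since moreover $\A^{\hat{\bu}}f(t,x,J(t,x;\hat{\bu}))=0$ --- by the unconditional identity noted in Remark~\ref{remarkhjb}, or directly from the heat identity $\Phi_y=\tfrac12\big(v\Phi_v+v^2\Phi_{vv}\big)$ satisfied by $\Phi$ --- the condition \eqref{hjb2} holds, and since $\hat{\bu}$ is admissible (continuous state--independent), Theorem~\ref{sn} yields that $\hat{\bu}$ is an equilibrium strategy.

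\emph{Step 4 (uniqueness).} Let $\bar{\bu}\equiv\bar v(\cdot)$ with $\bar v\in C([0,T])$ be any continuous state--independent equilibrium. Steps 1--2 apply verbatim with $\bar\psi(t):=\int_t^T\lVert\bar v(s)^{\top}\sigma(s)\rVert^2\,ds$ and the corresponding $\bar\phi$ replacing $A,\phi$, so $f^{\bar{\bu}}$ satisfies Assumption~\ref{a3} and \eqref{hjb2} holds for $\bar{\bu}$. By the uniqueness of the maximizer found in Step 3, \eqref{hjb2} forces $\bar v(t)=(\sigma^{\top}(t))^{-1}P_t(\kappa(t)G(\bar\psi(t)))$ for every $t\in[0,T)$; consequently $\lVert\bar v(t)^{\top}\sigma(t)\rVert^2=\lVert P_t(\kappa(t)G(\bar\psi(t)))\rVert^2$, so $\bar\psi$ solves the ODE \eqref{ode0}. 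By the assumed uniqueness of that solution, $\bar\psi\equiv A$, whence $\bar v(t)=\hat{\bu}(t,x)$ on $[0,T)$ and, by continuity, on $[0,T]$; thus $\bar{\bu}=\hat{\bu}$. The main obstacle is Step~2: the regularity of $f$ (differentiating $\Phi$ under the expectation, the polynomial--growth bounds, and the uniform convergence in Assumption~\ref{a3}(ii)); conceptually everything else is transparent once the log--normal structure and the roles of $H$ and $G$ are recognized.
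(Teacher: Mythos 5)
Your proposal is correct and follows essentially the same route as the paper's proof: the lognormal closed form giving $J(t,x;\hat{\bu})=xe^{\phi(t)}H(A(t))$, the verification of Assumption \ref{a3} by differentiation under the expectation (the paper's Lemma \ref{lemma31}), the identity $-f_x/(xf_{xx})=G(A(t))$ reducing the HJB maximization to the projection $P_t(\kappa(t)G(A(t)))$ (Lemma \ref{lma:GA}), and uniqueness via the ODE \eqref{ode0}. The only cosmetic difference is your optional use of the heat identity $\Phi_y=\tfrac12(v\Phi_v+v^2\Phi_{vv})$ to get $\A^{\hat{\bu}}f=0$, where the paper instead uses the It\^o/martingale argument of Remark \ref{remarkhjb}, which you also cite.
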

\begin{proof}
See Appendix \ref{proofofCRRA}.
\end{proof}


\begin{remark}\label{rmk:ode}
Some comments on ODE (\ref{ode0}) are in order.
\begin{itemize}

\item[(i)] In the case $\mathbb{U}=\mathbb{R}^{d}$, i.e.,  there is no constraint on $\bu$, we recover ODE (4.13) in \cite{liang2023dynamic} as the following
	\begin{equation}\label{ode1}
	\begin{cases}
		A'(t)=-\lVert\left(\kappa(t)G(A(t))\right)\rVert^{2},\quad t\in[0,T),\\
		A(T)=0.
	\end{cases}
\end{equation}
By Lemma 4.3 in \cite{liang2023dynamic}, if $\mathcal{G}(\infty)>\int_{0}^{T}||\kappa(s)||^{2}ds$, then  ODE (\ref{ode1}) admits a unique solution on $[0,T]$, which is given by $A(t)=\mathcal{G}^{-1}\left(\int_{t}^{T}||\kappa(s)||^{2}ds\right)$. Moreover, \cite{liang2023dynamic} uses a verification argument to show that, when $\mathbb{U}=\mathbb{R}^{d}$, $\hat{\bu}$ given by (\ref{hatpi}) is an open-loop equilibrium. In Theorem \ref{CRRA}, we prove that $\hat{\bu}$ is also a closed-loop equilibrium and it is the unique equilibrium in the class of continuous state-independent strategies. It is consistent with the intuition that state-independent open-loop equilibrium and closed-loop equilibrium should coincide.

\item[(ii)] When the market coefficients $\mu$ and $\sigma$ are constant, the ODE reduces to the following
	\begin{equation}\label{ode2}
	\begin{cases}
		A'(t)=-\lVert P\left(\kappa G(A(t))\right)\rVert^{2},\quad t\in[0,T),\\
		A(T)=0,
	\end{cases}
\end{equation}
where $P(\cdot)$ is the projection onto the set $\sigma^{\top}\mathbb{U}:=\left\{\sigma^{\top}u: u\in\mathbb{U}\right\}$ and $\kappa=\sigma^{-1}\mu$. In this case, if $\mathcal{P}(\infty)>T$, where 
\begin{equation*}
	\mathcal{P}(x):=\int_{0}^{x}\frac{1}{\lVert P\left(\kappa G(y)\right)\rVert^{2}}dy,
\end{equation*}
 then the above ODE (\ref{ode2}) admits a unique solution on $[0,T]$, which is given by $A(t)=\mathcal{P}^{-1}\left(T-t\right)$.
 
 \item[(iii)] If $G$ defined by (\ref{g}) is independent of $y$, then ODE (\ref{ode0}) obviously admits a unique solution on $[0,T]$,  which is given by $A(t)=\int_{t}^{T}\lVert P_{s}\left(\kappa(s)G\right)\rVert^{2}ds$.

 \item[(iv)] The next lemma provides the wellposedness of ODE (\ref{ode0}) for a general case. For a sufficient condition for the locally Lipschitz condition of $G$, see Lemma \ref{sufflip}.
\end{itemize}
\end{remark}

\begin{lemma}\label{well}
Suppose that  Assumption \ref{a31} holds.
If $G$ defined by (\ref{g}) is locally Lipschitz  and the no-blow-up condition holds, i.e., $\mathcal{Q}^{-1}(T)<\infty$, where
 \begin{equation}\label{Q}
 	\mathcal{Q}(x):=\int_{0}^{x}\frac{1}{6c_{3}(G(y))^{2}+4c_{2}\lVert\beta\rVert^{2}}dy,
 \end{equation}
 $c_{3}=\max\limits_{t\in[0,T]}\lVert\kappa(t)\rVert^{2}$, $c_{2}$ is from (\ref{c}) and $\beta$ is a vector belongs to $\mathbb{U}$,
then ODE (\ref{ode0})  admits a unique solution on $[0,T]$.
\end{lemma}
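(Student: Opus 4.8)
The plan is to read (\ref{ode0}) as a backward Cauchy problem $A'(t)=g(t,A(t))$ on $[0,T]$ with terminal value $A(T)=0$, where $g(t,a):=-\lVert P_{t}(\kappa(t)G(a))\rVert^{2}$, and to establish global wellposedness by the classical recipe: a Picard--Lindel\"of local existence and uniqueness statement, an a priori bound on $[0,T]$ ruling out blow-up, and then a continuation argument. Passing to $\tau:=T-t$ and $\bar A(\tau):=A(T-\tau)$ turns it into the forward problem $\bar A'(\tau)=\lVert P_{T-\tau}(\kappa(T-\tau)G(\bar A(\tau)))\rVert^{2}$, $\bar A(0)=0$, on the domain $[0,T]\times[0,+\infty)$ (recall $G$ is defined on all of $[0,+\infty)$).

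First I would check the regularity of the right-hand side. For fixed $t$, the map $u\mapsto\lVert y-\sigma^{\top}(t)u\rVert^{2}$ is, by the nondegeneracy (\ref{c}) of $\sigma(t)$, strictly convex and coercive on the fixed closed convex set $\mathbb{U}$, so it has a unique minimizer; a Berge-type argument gives joint continuity in $(t,y)$ of that minimizer and hence of $P_{t}(y)$, and, being a metric projection onto a convex set, $P_{t}$ is non-expansive ($1$-Lipschitz) in $y$. Combining this with the assumed local Lipschitz continuity of $G$, the continuity and boundedness of $\kappa=\sigma^{-1}\mu$ on $[0,T]$, and the local Lipschitz continuity of $y\mapsto\lVert y\rVert^{2}$, one gets that $g$ is continuous on $[0,T]\times[0,+\infty)$ and locally Lipschitz in $a$, uniformly in $t$. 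This yields a unique local solution through $(T,0)$ and, more importantly, uniqueness of any solution on any time interval on which it remains in a bounded set of values.

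Next I would derive the a priori bound. Since $g\le0$, any solution $A$ is non-increasing, so $A(t)\ge A(T)=0$ throughout its interval of existence. For the upper bound, fix $\beta\in\mathbb{U}$; then $\sigma^{\top}(t)\beta\in\sigma^{\top}(t)\mathbb{U}$, so non-expansiveness of the projection relative to this point gives $\lVert P_{t}(\kappa(t)G(a))\rVert\le\lVert\kappa(t)G(a)-\sigma^{\top}(t)\beta\rVert+\lVert\sigma^{\top}(t)\beta\rVert$. Squaring and using elementary inequalities together with $\lVert\kappa(t)\rVert^{2}\le c_{3}$ and $\lVert\sigma^{\top}(t)\beta\rVert^{2}\le c_{2}\lVert\beta\rVert^{2}$ (the latter from (\ref{c})), one obtains $0\le-A'(t)=\lVert P_{t}(\kappa(t)G(A(t)))\rVert^{2}\le 6c_{3}(G(A(t)))^{2}+4c_{2}\lVert\beta\rVert^{2}$, i.e. exactly the reciprocal of the integrand defining $\mathcal{Q}$ in (\ref{Q}). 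Hence $\bar A'(\tau)\le 6c_{3}(G(\bar A(\tau)))^{2}+4c_{2}\lVert\beta\rVert^{2}$ with $\bar A(0)=0$; comparing with the solution $\bar B(\tau)=\mathcal{Q}^{-1}(\tau)$ of the associated scalar ODE with equality (well defined and strictly increasing on $[0,T]$ precisely because $\mathcal{Q}^{-1}(T)<\infty$, i.e. $T<\mathcal{Q}(\infty)$), the comparison theorem gives $0\le\bar A(\tau)\le\mathcal{Q}^{-1}(\tau)\le\mathcal{Q}^{-1}(T)<\infty$ for every $\tau$ in the existence interval.

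Finally I would invoke the continuation argument: the maximal backward solution through $(T,0)$ must either reach $t=0$ or have $A$ leave every compact subset of $[0,+\infty)$; the a priori bound confines it to the fixed compact interval $[0,\mathcal{Q}^{-1}(T)]$, so the former holds, i.e. the solution exists on all of $[0,T]$, and uniqueness on $[0,T]$ follows from the local-Lipschitz uniqueness of the second step together with the same confinement. The step I expect to be the main obstacle is the joint continuity in $(t,a)$ of the $t$-dependent convex projection $P_{t}$ — this is the only place where the structure of the constraint set $\mathbb{U}$ and the nondegeneracy (\ref{c}) genuinely enter the wellposedness argument, and it is what is needed to place $g$ in the Picard--Lindel\"of framework; everything else is routine once it is in hand. (Alternatively one could bypass it by constructing Carath\'eodory solutions, for which only $t$-measurability of $g$ is required.)
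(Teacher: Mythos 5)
Your proposal follows essentially the same route as the paper's proof: reverse time to a forward Cauchy problem, use the (non-expansive) projection together with the local Lipschitz continuity of $G$ and the boundedness of $\kappa$ to get local existence and uniqueness, derive a quadratic a priori bound by comparing with a fixed $\beta\in\mathbb{U}$, and then use $\mathcal{Q}$ and a continuation argument to exclude blow-up on $[0,T]$; your additional care about joint continuity of $(t,y)\mapsto P_{t}(y)$ is a point the paper leaves implicit and is welcome. One small correction: your chain $\lVert P_{t}(y)\rVert\le\lVert y-\sigma^{\top}(t)\beta\rVert+\lVert\sigma^{\top}(t)\beta\rVert$, after squaring, yields the bound $4c_{3}(G(a))^{2}+6c_{2}\lVert\beta\rVert^{2}$, which is not the reciprocal of the integrand in (\ref{Q}) and is not dominated by it in general; to land exactly on $6c_{3}(G(a))^{2}+4c_{2}\lVert\beta\rVert^{2}$, argue as the paper does, using the distance-minimizing property $\lVert P_{t}(y)-y\rVert\le\lVert\sigma^{\top}(t)\beta-y\rVert$ together with the split $\lVert P_{t}(y)\rVert^{2}\le2\lVert P_{t}(y)-y\rVert^{2}+2\lVert y\rVert^{2}$.
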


\begin{proof} See Appendix \ref{AA}.
\end{proof}

\begin{example}[Weighted utility]\label{sec:exm:wu:c}
Let the objective function $J$ be given by (\ref{G1}) with  $F(x)=x^{1-\rho}x^{\gamma}-x^{\gamma}$, where $-1<\gamma\leq0$, $\gamma\leq \rho<\gamma+1$. Obviously, $F$ satisfies Assumption \ref{a31} and 
\begin{equation*}
	J(t,x;\bu)=\left(\frac{\mathbb{E}\left[\left(X^{\bu}(T)\right)^{1-\rho}\left(X^{\bu}(T)\right)^{\gamma}\bigg|X^{\bu}(t)=x\right]}{\mathbb{E}\left[\left(X^{\bu}(T)\right)^{\gamma}\bigg|X^{\bu}(t)=x\right]}\right)^{\frac{1}{1-\rho}},
\end{equation*}
which is also a special case of weighted utility (c.f. \cite{Backbook2017}). 
Moreover, we can see that  $$H(y)=e^{\frac{1}{2}(1-\rho+2\gamma)y}\text{ and } G(y)=\frac{1}{\rho-2\gamma},\quad y\ge0.$$ 
Then, by Remark \ref{rmk:ode}(iii), ODE (\ref{ode0}) admits a unique solution, which is given by
$A(t)=\int_{t}^{T}\lVert P_{s}\left(\frac{\kappa(s)}{\rho-2\gamma}\right)\rVert^{2}ds$.
Therefore, by Theorem \ref{CRRA},  
\begin{equation}\label{weighted}
		\hat{\bu}(t,x)=\left(\sigma^{\top}(t)\right)^{-1}P_{t}\left(\frac{\kappa(t)}{\rho-2\gamma}\right), \quad (t,x)\in[0,T]\times(0,+\infty),
	\end{equation}
is the unique continuous state-independent equilibrium strategy.
\end{example}

\begin{example}[Mixed CRRA utility]\label{sec:exm:crra:c}
Let the objective function $J$ be implicitly defined by
\begin{equation}\label{mixed}
	\mathbb{E}\left[\int_{\gamma_0}^{\gamma_1}U_{\gamma}\left(\frac{X^{\bu}(T)}{J(t,x;\bu)}\right)\textbf{F}(d\gamma)\bigg|X^{\bu}(t)=x\right]=0,
\end{equation}
where each $U_{\gamma}$ is a CRRA utility function\footnote{\label{footnotecrra}Keep in mind the difference between the CRRA utility functions and the CRRA betweenness preferences: a CRRA utility function is a function $U_\gamma$ given by \eqref{crrau}, whereas a CRRA betweenness preference is represented by \eqref{G1}.} given by
\begin{equation}\label{crrau}
	U_{\gamma}(x)=\begin{cases}
		\frac{x^{\gamma}-1}{\gamma},\quad \gamma\neq0,\gamma<1,\\
		\log x,\quad\gamma=0,
	\end{cases}
\end{equation}
$-\infty<\gamma_0<\gamma_1<1$, and $\textbf{F}$ is a probability measure on $\left(\gamma_0,\gamma_1\right)$. 
Note that (\ref{mixed}) is a special case of (\ref{G1}) with $F(x)=\int_{\gamma_0}^{\gamma_1} U_{\gamma}(x)\textbf{F}(d\gamma)$,  which satisfies Assumption \ref{a31}.
In this case, $H$ is implicitly defined by 
\begin{equation}\label{hmix}
   \mathbb{E}\left[\int_{\gamma_0}^{\gamma_1} U_{r}\left(\frac{e^{\sqrt{y}\xi}}{H(y)}\right)\textbf{F}(d\gamma)\right]=0,\quad y\in[0,+\infty),
\end{equation}
and $G$ by
\begin{equation}\label{gmix}
	G(x)=\frac{\int_{\gamma_0}^{\gamma_1}(H(x))^{1-\gamma}e^{\frac{\gamma^{2}x}{2}}\textbf{F}(d\gamma)}{\int_{\gamma_0}^{\gamma_1}(1-\gamma)(H(x))^{1-\gamma}e^{\frac{\gamma^{2}x}{2}}\textbf{F}(d\gamma)},\quad x\in[0,+\infty).
\end{equation}
Lemma \ref{pH} shows that $H$ is continuously differentiable on $[0,+\infty)$.
By (\ref{gmix}), we know that $G$ is also continuously differentiable on $[0,+\infty)$, hence $G$ is locally Lipschitz. As $\frac{1}{1-\gamma_{0}}\leq G(x)\leq\frac{1}{1-\gamma_{1}}$ for any $x\geq0$, the no-blow-up condition holds. Therefore, by Lemma \ref{well},  ODE (\ref{ode0}) admits a unique solution $A$.
Then, by Theorem \ref{CRRA}, the strategy $\hat{\bu}$ given by \eqref{hatpi} is the unique continuous state-independent equilibrium strategy.
\end{example}

\begin{remark}
	(\ref{mixed}) is a natural generalization of the expected CRRA utility. Indeed, if $\textbf{F}$ is a Dirac measure $\delta_{\gamma}$, then $J(t,x;\bu)=U^{-1}_{\gamma}\left(\mathbb{E}\left[U_{\gamma}(X^{\bu}_{T})|X^{\bu}=x\right]\right)$. In this case, $\hat{\bu}(t,x)=(\sigma^{\top}(t))^{-1}P_{t}\left(\frac{\kappa(t)}{1-\gamma}\right)$, which coincides the solution of constrained portfolio optimization problem with CRRA utility. { See, for instance,   \cite[Theorem 14]{Hu-2005}}.
\end{remark}

\subsection{Portfolio selection with borrowing cost}\label{example2}
In this part, we consider a portfolio selection problem with borrowing cost: the interesting rate for borrowing money is higher than for saving. Let the state space $\X=(0,+\infty)$ and the control space $\mathbb{U}=\mathbb{R}^{d}$.
Corresponding to a feedback strategy $\bu:[0,T]\times(0,+\infty)\to \mathbb{U}$,  the self-financing wealth process (state process) $X^{\bu}$ satisfies the following SDE
 \begin{equation*}
	\begin{split}
		&dX^{\bu}(s)=X^{\bu}(s)\Big(\big(1-\textbf{1}\cdot\bu(s,X^{\bu}(s))\big)_+r(s)
		-\big(\textbf{1}\cdot\bu(s,X^{\bu}(s))-1\big)_+R(s)+\bu^\top(s,X^{\bu}(s))\mu(s)\Big)ds\\
		&\qquad\qquad\qquad +X^{\bu}(s)\bu^{\top}(s,X^{\bu}(s))\sigma(s)dW(s),\\
		&X^{\bu}(t)=x\in(0,+\infty),
	\end{split}
\end{equation*}
where $\textbf{1}$ denotes a $d$-dimensional column vector with all components being 1,  $r:[0,T]\rightarrow\mathbb{R}$ is a continuous function representing the interest rate for saving, $R:[0,T]\rightarrow\mathbb{R}$ is a continuous function representing the interesting rate for borrowing, which is \emph{no smaller} than $r$,  $x$ is the agent's wealth level at time $t$, $X^{\bu}(s)$ is the agent's wealth at time $s$,  $\bu\left(s,X^{\bu}(s)\right)$ is the proportion of the wealth invested into the stocks at time $s$, and  $a_+ =\max\{a,0\}$ for $a\in \mathbb{R}$. In this setting, we have, for $(t,x)\in[0,T)\times(0,+\infty)$,
\begin{equation*}
    \begin{split}
        &\mu^\bu(t,x)=x\big(\big(1-\textbf{1}\cdot\bu(t,x)\big)_+r(t)
		-\big(\textbf{1}\cdot\bu(t,x)-1\big)_+R(t)+\bu^\top(t,x)\mu(t)\big),\\ 
        &\sigma^\bu(t,x)=x\bu^\top(t,x)\sigma(t).
    \end{split}
\end{equation*}
 
We will consider the objective function $J$ which is implicitly defined by (\ref{G1}) with $F$ satisfying Assumption \ref{a31}. Similarly,  in this case any continuous state-independent strategy is also admissible. Let  $\kappa_{1}$ and $\kappa_{2}:[0,T]\rightarrow\mathbb{R}^{d}$ be defined by
\begin{equation*}
	\kappa_{1}(t)=\left(\sigma(t)\right)^{-1}\left(\mu(t)-r(t)\textbf{1}\right),\quad \kappa_{2}(t)=\left(\sigma(t)\right)^{-1}\left(\mu(t)-R(t)\textbf{1}\right).
\end{equation*}
\begin{theorem}\label{CRRA2}
	Suppose that Assumptions \ref{a2} and  \ref{a31} hold. If the following ODE admits a unique solution on $[0,T]$,
	\begin{equation}\label{ode.0}
		\begin{cases}
			A'(t)=-\mathcal{H}(t,A(t)),\quad t\in[0,T),\\
			A(T)=0,
		\end{cases}
	\end{equation}
where
\begin{equation*}
		\mathcal{H}(t,y)=\begin{cases}
		    \left(G(y)\right)^{2}\lVert\kappa_{2}(t)\rVert^{2},\quad\text{if}\,\,\, \frac{1}{\textbf{1}^{T}(\sigma^{T}(t))^{-1}\kappa_{2}(t)}\leq G(A(t)),\\
		   \bigg\lVert\left(\sigma(t)\right)^{-1}\bigg[G(y)\left(\mu(t)-\frac{\textbf{1}^{\top}\left(\sigma(t)\sigma^{\top}(t)\right)^{-1}\mu(t)}{\textbf{1}^{\top}\left(\sigma(t)\sigma^{\top}(t)\right)^{-1}\textbf{1}}\textbf{1}\right)+\frac{\textbf{1}}{\textbf{1}^{\top}\left(\sigma(t)\sigma^{\top}(t)\right)^{-1}\textbf{1}}\bigg]\bigg\rVert^{2},\\
           \qquad\qquad\qquad\text{if}\,\,\, \frac{1}{\textbf{1}^{T}(\sigma^{T}(t))^{-1}\kappa_{1}(t)}< G(A(t))<\frac{1}{\textbf{1}^{T}(\sigma^{T}(t))^{-1}\kappa_{2}(t)},\\
		   \left(G(y)\right)^{2}\lVert\kappa_{1}(t)\rVert^{2},\quad\text{if}\,\,\, G(A(t))\leq\frac{1}{\textbf{1}^{T}(\sigma^{T}(t))^{-1}\kappa_{1}(t)},
		\end{cases}
\end{equation*}
	then the  strategy $\hat{\bu} $ defined by
\begin{equation}\label{hatpi2}
\hat{\bu}(t,x)=\begin{cases}
G(A(t))\left(\sigma^{\top}(t)\right)^{-1}\kappa_{2}(t),\quad\text{if}\,\,\, \frac{1}{\textbf{1}^{T}(\sigma^{T}(t))^{-1}\kappa_{2}(t)}\leq G(A(t)),\\
\left(\sigma(t)\sigma^{\top}(t)\right)^{-1}\bigg[G(A(t))\left(\mu(t)-\frac{\textbf{1}^{\top}\left(\sigma(t)\sigma^{\top}(t)\right)^{-1}\mu(t)}{\textbf{1}^{\top}\left(\sigma(t)\sigma^{\top}(t)\right)^{-1}\textbf{1}}\textbf{1}\right)+\frac{\textbf{1}}{\textbf{1}^{\top}\left(\sigma(t)\sigma^{\top}(t)\right)^{-1}\textbf{1}}\bigg],\\
\qquad\qquad\qquad\text{if}\,\,\, \frac{1}{\textbf{1}^{T}(\sigma^{T}(t))^{-1}\kappa_{1}(t)}< G(A(t))<\frac{1}{\textbf{1}^{T}(\sigma^{T}(t))^{-1}\kappa_{2}(t)},\\
G(A(t))\left(\sigma^{\top}(t)\right)^{-1}\kappa_{1}(t),\quad\text{if}\,\,\, G(A(t))\leq \frac{1}{\textbf{1}^{T}(\sigma^{T}(t))^{-1}\kappa_{1}(t)},
\end{cases}
\end{equation}
	is the unique continuous state-independent equilibrium strategy.
\end{theorem}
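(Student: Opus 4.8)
The plan is to verify the hypotheses of Theorem \ref{sn} (equivalently, Remark \ref{remarkhjb}) for the candidate strategy $\hat{\bu}$ given by \eqref{hatpi2}, and then to establish uniqueness within continuous state-independent strategies. Since the objective function is of CRRA-betweenness type, I would first exploit positive homogeneity: for a continuous state-independent strategy $\bu = v(\cdot)$, the wealth process $X^{\bu}$ is a geometric-type process whose terminal log-return is Gaussian conditional on the present, so that $J(t,x;\bu) = x\, e^{m(t)} H\!\left(\int_t^T \lVert \sigma^\top(s) v(s)\rVert^2 ds\right)$ for an appropriate drift term $m(t)$ depending on $v$ through the (piecewise-defined) wealth drift. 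In particular, for the candidate $\hat{\bu}$, the exponent $\int_t^T \lVert \sigma^\top(s)\hat{\bu}(s)\rVert^2 ds$ should be exactly $A(t)$ — this is the role of ODE \eqref{ode.0} — so that $J(t,x;\hat{\bu}) = x\,e^{\hat m(t)}H(A(t))$. I would then compute $f(t,x,z) = \mathbb{E}[F(X^{\hat{\bu}}(T)/z)\mid X^{\hat{\bu}}(t)=x]$ explicitly as a function of $(t,x,z)$, check that Assumption \ref{a3} holds (using Lemmas \ref{pH}, \ref{remarktolemma} and the polynomial-growth bounds in Assumption \ref{a31}), and in particular obtain closed forms for $f_x$, $f_{xx}$, $f_t$, $f_z$.

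Next I would plug these into the HJB-type inequality \eqref{hjb}, i.e. $\A^{u}f(t,x,J(t,x;\hat{\bu})) \le 0$ for all $u\in\mathbb{R}^d$, with equality at $u = \hat{\bu}(t,x)$. Because $f$ is of the form $x^{\text{power}}$ times a function of $t$ and $z$ (thanks to homogeneity of $F$ being only asymptotic, one actually works with $\mathbb{E}[F(\cdot)]$ directly, but the $x$-dependence still factors through $X^{\hat{\bu}}(T)/z$ scaling), the operator $\A^{u}f$ evaluated at $z = J(t,x;\hat{\bu})$ reduces, after dividing by the common positive factor $-\mathbb{E}[e^{2\sqrt{A(t)}\xi}F''(\cdot)]/\text{(something)} > 0$, to a quadratic-plus-kink maximization in $u$ of the form $\max_{u\in\mathbb{R}^d}\big\{ \text{(drift term)}(u) \cdot G(A(t)) - \tfrac12 \lVert\sigma^\top(t)u\rVert^2\big\}$, where the drift term is $\big(1-\mathbf{1}\cdot u\big)_+ r(t) - \big(\mathbf{1}\cdot u - 1\big)_+ R(t) + u^\top\mu(t)$. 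This is a concave (piecewise-quadratic) program in $u$; its solution splits into the three regimes according to whether the unconstrained maximizer wants $\mathbf{1}\cdot u > 1$ (borrowing, use $R$), $\mathbf{1}\cdot u < 1$ (saving, use $r$), or exactly $\mathbf{1}\cdot u = 1$ (on the kink). Carrying out the Lagrangian/KKT analysis on the kink, I expect to recover precisely the three branches of \eqref{hatpi2}, and the maximal value to be precisely $\mathcal{H}(t,A(t))$ up to the positive normalizing factor, which forces $A'(t) = -\mathcal{H}(t,A(t))$ and confirms consistency of the ansatz. Thus \eqref{hjb2} holds and $\hat{\bu}$ is an equilibrium.

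For uniqueness, I would start from any continuous state-independent equilibrium $\bu = v(\cdot)$, set $\Phi(t) = \int_t^T \lVert \sigma^\top(s) v(s)\rVert^2 ds$, and use \eqref{hjb2}: at each $t$, $v(t)$ must be a maximizer of the same concave program above with $G(\Phi(t))$ in place of $G(A(t))$. The maximizer of that program is unique in $\sigma^\top(t) v(t)$ (strict concavity in $\sigma^\top(t)u$ from the nondegeneracy \eqref{c}), hence $v(t)$ is uniquely determined by $\Phi(t)$ via the same three-branch formula; differentiating $\Phi$ then shows $\Phi$ solves ODE \eqref{ode.0}, whose solution is assumed unique, so $\Phi = A$ and therefore $v = \hat{\bu}$. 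The main obstacle I anticipate is the kink analysis: one must show carefully that when $G(A(t))$ lies strictly between the two thresholds $1/(\mathbf{1}^\top(\sigma^\top(t))^{-1}\kappa_i(t))$, the optimal $u$ sits exactly on the hyperplane $\mathbf{1}\cdot u = 1$ — i.e. the non-smoothness of the drift at $\mathbf{1}\cdot u=1$ is genuinely active — and to derive the correct projected formula for $\hat{\bu}$ there via a one-dimensional Lagrange multiplier for the constraint $\mathbf{1}\cdot u = 1$; checking that the threshold inequalities in \eqref{hatpi2} and in $\mathcal{H}$ are exactly the ranges where each branch is KKT-optimal is the delicate bookkeeping step. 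A secondary technical point is verifying Assumption \ref{a3} for this $f$ despite the piecewise-defined (only Lipschitz, not $C^1$ in $u$) drift — but since $\hat{\bu}$ itself is continuous in $t$ and the drift is evaluated along the fixed continuous strategy, $f$ inherits enough regularity, and Lemma \ref{remarktolemma} handles the uniform-convergence requirement.
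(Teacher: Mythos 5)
Your proposal is correct and follows essentially the same route as the paper: reduce via Theorem \ref{sn}/Remark \ref{remarkhjb} and the representation $J(t,x;\hat{\bu})=xe^{B(t)}H(A(t))$ (so that $-f_x(t,x,J)/(xf_{xx}(t,x,J))=G(A(t))$) to a concave piecewise-quadratic program in $u$ with a kink at $\mathbf{1}\cdot u=1$, resolve it in the three regimes (the paper's Cases 1--3, including exactly your Lagrange-multiplier step on the hyperplane and the verification that the threshold inequalities delimit where each branch is optimal), and close the loop by noting that $A(t)=\int_t^T\lVert\sigma^{\top}(s)\hat{\bu}(s)\rVert^2\,ds$ must then solve \eqref{ode.0}, whose assumed unique solvability yields uniqueness among continuous state-independent strategies. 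One minor imprecision: the ``$\max=0$'' part of \eqref{hjb2} holds automatically (as in the proof of Theorem \ref{CRRA}) and the ODE arises simply from differentiating $A$, not from equating the maximal value of the reduced program (which also carries the interest-rate term) to $\mathcal{H}(t,A(t))$; this does not affect the validity of your plan.
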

\begin{proof}
See Appendix \ref{proofCRRA2}.
\end{proof}
\begin{remark}\label{commentonode2}
If $G$ defined by (\ref{g}) is independent of $y$, then so is $\mathcal{H}$. Therefore, ODE (\ref{ode.0}) obviously admits a unique solution on $[0,T]$, which is given by 
$A(t)=\int_{t}^{T}\mathcal{H}(s)ds$.
Generally, the next lemma provides the wellposedness of ODE (\ref{ode.0}). For a sufficient condition for the locally Lipschitz condition and  global boundedness of $G$, see Lemma \ref{sufflip} and  Remark \ref{Gbound}.
\end{remark}

\begin{lemma}\label{well.}
   If $G$ defined by (\ref{g}) is locally Lipschitz  and bounded on $[0,+\infty)$,
		then  ODE (\ref{ode.0})  admits a unique solution on $[0,T]$.  
\end{lemma}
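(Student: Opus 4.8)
The plan is to cast the terminal-value problem \eqref{ode.0} in the classical Cauchy--Lipschitz framework: I would show that the right-hand side $\mathcal{H}$ is jointly continuous on $[0,T]\times[0,+\infty)$ and locally Lipschitz in $y$, uniformly in $t$, and moreover uniformly bounded. Local existence and uniqueness then follow from the Picard--Lindel\"{o}f theorem, and the uniform bound rules out finite-time blow-up, so the unique local solution extends to all of $[0,T]$.

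First I would record the structural identity $\mathcal{H}(t,y)=\lVert\sigma^{\top}(t)\,\widetilde{\bu}(t,G(y))\rVert^{2}$, where $\widetilde{\bu}(t,g)$ denotes the function of $(t,g)\in[0,T]\times(0,+\infty)$ obtained from \eqref{hatpi2} by substituting $g$ for $G(A(t))$; this is just the observation that in each of the three regimes $\sigma^{\top}(t)\widetilde{\bu}(t,g)$ equals the corresponding bracketed expression occurring in $\mathcal{H}$ (using $\sigma^{\top}(\sigma\sigma^{\top})^{-1}=\sigma^{-1}$). A direct computation then shows that the middle branch of $\widetilde{\bu}$ always satisfies $\textbf{1}^{\top}\widetilde{\bu}=1$ and that on the two switching surfaces $g=1/\bigl(\textbf{1}^{\top}(\sigma^{\top}(t))^{-1}\kappa_{i}(t)\bigr)$, $i=1,2$, the outer branches satisfy $\textbf{1}^{\top}\widetilde{\bu}=1$ as well and coincide with the middle branch (the effective drift reduces to $u^{\top}\mu(t)$ along $\textbf{1}^{\top}u=1$); hence $\widetilde{\bu}$ is jointly continuous. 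Within each branch $\widetilde{\bu}$ is affine in $g$ with coefficients that are continuous, hence bounded, on $[0,T]$ — here \eqref{c} guarantees that $\sigma(t)$ and $\sigma(t)\sigma^{\top}(t)$ are uniformly invertible and $\textbf{1}^{\top}(\sigma(t)\sigma^{\top}(t))^{-1}\textbf{1}>0$ — so $\widetilde{\bu}(t,\cdot)$ is Lipschitz in $g$ with a constant independent of $t$. Composing with the locally Lipschitz $G$ and then with the smooth map $v\mapsto\lVert\sigma^{\top}(t)v\rVert^{2}$ (whose Lipschitz modulus on bounded sets is controlled by $\sup_{[0,T]}\lVert\sigma(t)\rVert$) yields that $\mathcal{H}(t,\cdot)$ is locally Lipschitz in $y$, uniformly in $t$; joint continuity of $\mathcal{H}$ follows the same way from continuity of $G$ and of the market coefficients.

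Next, each branch of $\mathcal{H}$ is a nonnegative quantity bounded above by a continuous function of $t$ times $\bigl(\sup_{y\ge0}G(y)\bigr)^{2}$, so the boundedness hypothesis on $G$ furnishes a constant $M<\infty$ with $0\le\mathcal{H}(t,y)\le M$ on $[0,T]\times[0,+\infty)$. By Picard--Lindel\"{o}f there is a unique local solution of \eqref{ode.0} near $t=T$; since $\mathcal{H}\ge0$, any solution is non-increasing, so $A(t)\ge A(T)=0$ and the solution never leaves the domain $[0,+\infty)$ on which $\mathcal{H}$ is defined; and $\lvert A'(t)\rvert\le M$ forbids blow-up, so the local solution extends to all of $[0,T]$, uniquely. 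The main obstacle will be the second paragraph: checking that the three pieces of $\widetilde{\bu}$ (equivalently of $\mathcal{H}$) glue into a function that is not merely continuous but Lipschitz in $g$ across the $t$-dependent switching surfaces; everything else is a routine application of ODE theory once the boundedness of $G$ is invoked. As an alternative to the explicit gluing computation, one may instead observe that (up to a positive factor) $\mathcal{H}(t,y)$ is the value of the strictly concave maximization over $u\in\mathbb{R}^{d}$ that defines $\hat{\bu}$ through \eqref{hjb2} — concavity using $R\ge r$ — and invoke Berge's maximum theorem for continuity, with the Lipschitz dependence on $y$ coming from differentiating the first-order conditions and again using that $G$ is locally Lipschitz.
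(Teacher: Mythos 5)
Your proposal is correct and follows essentially the same route as the paper: recast \eqref{ode.0} as a Cauchy problem, get local wellposedness from the local Lipschitz property of $\mathcal{H}$ in $y$ (uniformly in $t$) inherited from $G$, and use the bound $\mathcal{H}(t,y)\le C\bigl(1+\sup_{y\ge0}\lvert G(y)\rvert^{2}\bigr)$ from the boundedness of $G$ to exclude blow-up and extend to $[0,T]$. The only difference is that you spell out the piecewise-affine structure of \eqref{hatpi2} and the continuity/Lipschitz gluing across the switching surfaces (and the sign argument keeping $A\ge0$), which the paper simply asserts.
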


\begin{proof}
See Appendix \ref{BB}.
\end{proof}

\begin{example}[Weighted utility]
Consider the weighted utility as in Example \ref{sec:exm:wu:c}, from which we know 
$G(y)=\frac{1}{\rho-2\gamma}$, $y\ge0$.
Then by Remark \ref{commentonode2},  ODE (\ref{ode.0}) admits a unique solution. 
Let
	\begin{equation}\label{weighted2}
		\hat{\bu}(t,x)=\begin{cases}
			\frac{\left(\sigma^{\top}(t)\right)^{-1}\kappa_{2}(t)}{\rho-2\gamma},\quad\text{if}\,\,\, \frac{1}{\textbf{1}^{T}(\sigma^{T}(t))^{-1}\kappa_{2}(t)}\leq\frac{1}{\rho-2\gamma},\\
			\left(\sigma(t)\sigma^{\top}(t)\right)^{-1}\bigg[\frac{1}{\rho-2\gamma}\left(\mu(t)-\frac{\textbf{1}^{\top}\left(\sigma(t)\sigma^{\top}(t)\right)^{-1}\mu(t)}{\textbf{1}^{\top}\left(\sigma(t)\sigma^{\top}(t)\right)^{-1}\textbf{1}}\textbf{1}\right)+\frac{\textbf{1}}{\textbf{1}^{\top}\left(\sigma(t)\sigma^{\top}(t)\right)^{-1}\textbf{1}}\bigg],\\
			\qquad\qquad\qquad\text{if}\,\,\, \frac{1}{\textbf{1}^{T}(\sigma^{T}(t))^{-1}\kappa_{1}(t)}< \frac{1}{\rho-2\gamma}<\frac{1}{\textbf{1}^{T}(\sigma^{T}(t))^{-1}\kappa_{2}(t)},\\
			\frac{\left(\sigma^{\top}(t)\right)^{-1}\kappa_{1}(t)}{\rho-2\gamma},\quad\text{if}\,\,\, \frac{1}{\rho-2\gamma}\leq \frac{1}{\textbf{1}^{T}(\sigma^{T}(t))^{-1}\kappa_{1}(t)}.
		\end{cases}
	\end{equation}
Then, by Theorem \ref{CRRA2}, $\hat{\bu}$ is the unique continuous state-independent equilibrium strategy.
\end{example}

\begin{example}[Mixed CRRA utility]
Let $F$ be generated by a mixture of CRRA utility as in (\ref{mixed}) and $G$ be given by (\ref{gmix}). Recalling from Example \ref{sec:exm:crra:c} that $G$ is locally Lipschitz and bounded on $[0,+\infty)$, by Lemma \ref{well.}, we know that ODE (\ref{ode.0}) admits a unique solution $A$. 
Then, by Theorem \ref{CRRA2}, the strategy $\hat{\bu}$ given by \eqref{hatpi2} is the unique continuous state-independent equilibrium strategy.
	In particular,  if $\textbf{F}$ is a Dirac measure $\delta_{\gamma}$, then $G(y)=\frac{1}{1-\gamma}$ and the equilibrium strategy reduces to the solution for CRRA  utility, i.e.,
		\begin{equation*}
		\hat{\bu}(t,x)=\begin{cases}
			\frac{\left(\sigma^{\top}(t)\right)^{-1}\kappa_{2}(t)}{1-\gamma},\quad\text{if}\,\,\, \frac{1}{\textbf{1}^{T}(\sigma^{T}(t))^{-1}\kappa_{2}(t)}\leq\frac{1}{1-\gamma},\\
			\left(\sigma(t)\sigma^{\top}(t)\right)^{-1}\bigg[\frac{1}{1-\gamma}\left(\mu(t)-\frac{\textbf{1}^{\top}\left(\sigma(t)\sigma^{\top}(t)\right)^{-1}\mu(t)}{\textbf{1}^{\top}\left(\sigma(t)\sigma^{\top}(t)\right)^{-1}\textbf{1}}\textbf{1}\right)+\frac{\textbf{1}}{\textbf{1}^{\top}\left(\sigma(t)\sigma^{\top}(t)\right)^{-1}\textbf{1}}\bigg],\\
			\qquad\qquad\qquad\text{if}\,\,\, \frac{1}{\textbf{1}^{T}(\sigma^{T}(t))^{-1}\kappa_{1}(t)}< \frac{1}{1-\gamma}<\frac{1}{\textbf{1}^{T}(\sigma^{T}(t))^{-1}\kappa_{2}(t)},\\
			\frac{\left(\sigma^{\top}(t)\right)^{-1}\kappa_{1}(t)}{1-\gamma},\quad\text{if}\,\,\, \frac{1}{1-\gamma}\leq \frac{1}{\textbf{1}^{T}(\sigma^{T}(t))^{-1}\kappa_{1}(t)}.
		\end{cases}
	\end{equation*}
	This coincides the results in \cite[Example 6.8.6]{Karatzasbook01} and \cite{Xu-1998}.
\end{example}

\section{Conclusions}\label{sec:conclusion}
In this paper we propose a class of stochastic control problems in a general diffusion framework, in which the objective function is implicitly defined. This kind of problems are inherently time-inconsistent. We provide a sufficient and necessary condition for a strategy to be a closed-loop equilibrium strategy. We apply this result to portfolio selection with CRRA betweenness preferences, allowing for convex constraints on the portfolio weights and borrowing cost.  The equilibrium strategies are explicitly characterized in terms of solutions of first-order ODEs for the case of deterministic market coefficients.
\vskip 10pt
{\bf Acknowledgements.}
This research is supported by the National Key R\&D Program of China
(NO. 2020YFA0712700) and NSFC (NOs. 12071146, 12271290, 11871036).

\begin{appendices}

\section{Estimation of the SDE (\ref{problem})}\label{appSDE}
For reader's convenience, we present an estimation of  SDE (\ref{problem}) under  Conditions (i) and (ii) in Definition \ref{d1}, which directly follows from Lemma A.2 in \cite{Hejiang2021}.

\begin{lemma}\label{lemmaonextimation}
     Under Conditions (i) and (ii) in Definition \ref{d1}, we have that for any $t\in[0,T)$ and $x\in\X$, there exists a unique strong solution to  SDE (\ref{problem}). Moreover, when $\X=\mathbb{R}^{n}$, for any integer $\gamma\geq1$, there exists a constant $L>0$ such that
     \begin{equation*}
         \mathbb{E}\bigg[\sup\limits_{s\in[t,T]}\lVert X(s)\rVert^{2\gamma}\bigg|X(t)=x\bigg]\leq L\left(1+\lVert x\rVert^{2\gamma}\right),\quad \forall t\in[0,T], x\in\X,
     \end{equation*}
     and when $\X=(0,+\infty)$, for any $\gamma\in\mathbb{R}$, there exists a constant $L>0$ such that 
     \begin{equation*}
         \mathbb{E}\bigg[\sup\limits_{s\in[t,T]}\lvert X(s)\rvert^{\gamma}\bigg|X(t)=x\bigg]\leq L\lvert x\rvert^{\gamma},\quad \forall t\in[0,T], x\in\X.
     \end{equation*}
\end{lemma}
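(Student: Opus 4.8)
The plan is to treat the two state spaces separately, relying on classical strong-solution theory together with moment estimates obtained by It\^o's formula, the Burkholder--Davis--Gundy (BDG) inequality, and Gr\"onwall's inequality, and to make each step rigorous by a localization argument. Throughout, fix $\bu\in\U$ and write the equation as $dX(s)=\mu^{\bu}(s,X(s))ds+\sigma^{\bu}(s,X(s))dW(s)$, so that Conditions (i) and (ii) in Definition \ref{d1} are precisely the local-Lipschitz and growth hypotheses on $\mu^{\bu},\sigma^{\bu}$.

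Consider first $\X=\mathbb{R}^{n}$. Existence and uniqueness of a strong solution follow from the standard truncation scheme: for each $k$, extend the coefficients off $\overline{\mathbb{B}_{k}(0)}$ to globally Lipschitz maps (possible by Condition (i)), solve the resulting globally Lipschitz SDE by Picard iteration to obtain $X^{k}$, and note that these solutions agree up to the exit time $\tau_{k}$ of $\overline{\mathbb{B}_{k}(0)}$; the linear-growth bound of Condition (ii) prevents explosion, so $\tau_{k}\uparrow T$ a.s.\ and the $X^{k}$ patch into a unique strong solution on $[t,T]$. For the moment bound I would apply It\^o's formula to $\psi(x):=(1+\lVert x\rVert^{2})^{\gamma}$, which is smooth and comparable to $1+\lVert x\rVert^{2\gamma}$; linear growth yields a drift dominated by $C\psi(X(s))$ and a quadratic-variation density also dominated by $C\psi(X(s))$. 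Stopping at $\tau_{k}$ annihilates the local-martingale expectation, BDG controls the running supremum of the martingale part by the integrated quadratic variation, and Gr\"onwall gives $\mathbb{E}[\sup_{s\le r\le T}\psi(X^{\tau_{k}}(r))\mid X(t)=x]\le L(1+\lVert x\rVert^{2\gamma})$ with $L$ independent of $k$; Fatou's lemma then removes the stopping.

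Now consider $\X=(0,+\infty)$, the delicate case. Here Condition (ii) reads $\lVert\mu^{\bu}(t,x)\rVert\le Lx$ and $\lVert\sigma^{\bu}(t,x)\rVert\le Lx$, a homogeneous bound that both controls growth at infinity and forces the coefficients to vanish as $x\downarrow0$. I would localize at the exit times $\tau_{k}$ of the interval $(1/k,k)$, on which (after a Lipschitz extension) the coefficients are globally Lipschitz and bounded, obtaining a unique solution up to $\tau:=\lim_{k}\tau_{k}$. The crux is to show $\tau\ge T$ a.s.\ and that the solution never reaches $\{0\}\cup\{+\infty\}$; this is exactly what the two-sided moment estimate delivers. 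Applying It\^o's formula to $X(s)^{\gamma}$ for an arbitrary real $\gamma$ gives
\begin{equation*}
d\bigl(X^{\gamma}\bigr)=\gamma X^{\gamma-1}\mu^{\bu}\,ds+\tfrac{1}{2}\gamma(\gamma-1)X^{\gamma-2}\lVert\sigma^{\bu}\rVert^{2}\,ds+\gamma X^{\gamma-1}\sigma^{\bu}\,dW,
\end{equation*}
and the homogeneous bounds turn the drift into a quantity dominated by $C_{\gamma}X^{\gamma}$ and the martingale integrand into one of norm at most $L|\gamma|X^{\gamma}$. Stopping at $\tau_{k}$, taking expectations, and invoking Gr\"onwall yields $\mathbb{E}[X^{\tau_{k}}(s)^{\gamma}\mid X(t)=x]\le e^{C_{\gamma}(s-t)}x^{\gamma}$, and a further BDG-plus-Gr\"onwall step upgrades this to $\mathbb{E}[\sup_{s\le r\le T}(X^{\tau_{k}}(r))^{\gamma}\mid X(t)=x]\le L x^{\gamma}$ uniformly in $k$. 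Taking $\gamma<0$ bounds $\mathbb{E}[\sup_{r}X(r)^{-|\gamma|}]$, which forces $\mathbb{P}(\inf_{r}X(r)=0)=0$; taking $\gamma>0$ rules out explosion to $+\infty$. Hence $\tau\ge T$ a.s., the solution stays in $(0,+\infty)$, and letting $k\to\infty$ (Fatou) gives the stated estimate for every $\gamma\in\mathbb{R}$.

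The main obstacle is precisely this positive-state case: unlike the $\mathbb{R}^{n}$ setting, neither global Lipschitz extension nor linear growth is available, so existence, uniqueness, and confinement to $(0,+\infty)$ cannot be decoupled from the moment analysis. The structural fact that makes everything close is the scaling bound $\lVert\mu^{\bu}\rVert,\lVert\sigma^{\bu}\rVert\le Lx$, which renders $X^{\gamma}$ a process whose drift is dominated by itself for \emph{every} real exponent, simultaneously producing the Gr\"onwall estimate and---through negative exponents---the strict positivity that legitimizes the localization.
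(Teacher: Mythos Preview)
Your argument is correct: the localization/truncation for existence--uniqueness, It\^o's formula on $(1+\lVert x\rVert^{2})^{\gamma}$ (resp.\ $x^{\gamma}$), BDG for the running supremum, and Gr\"onwall are exactly the ingredients needed, and your handling of the positive half-line---using the homogeneous bound $\lVert\mu^{\bu}\rVert,\lVert\sigma^{\bu}\rVert\le Lx$ to make the drift of $X^{\gamma}$ dominated by $C_{\gamma}X^{\gamma}$ for \emph{every} real $\gamma$, then invoking negative exponents to rule out absorption at $0$---is the right structural observation.

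The paper, however, does not prove this lemma at all: it simply states that the result ``directly follows from Lemma A.2 in \cite{Hejiang2021}'' and gives no argument. So your proposal is not merely a different route, it is the only actual proof on the table. What you gain by writing it out is self-containment and, in particular, an explicit justification of the $\X=(0,+\infty)$ case, where the simultaneous use of positive and negative moment exponents to establish non-exit from the open interval is a point that a bare citation obscures. What the paper gains by citing is brevity, since the $\mathbb{R}^{n}$ estimate is entirely classical and the positive case is a routine variant once one sees the scaling bound.
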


\section{Some proofs}
\subsection{Proof of Lemma \ref{remarktolemma}}\label{appRe3.1}
 For any $(t,x,z)\in[0,T)\times\X\times\mathbb{Z}$, choosing $\tilde{z}>z$ and $r>0$ such that  $\overline{\mathbb{B}_{r}(x)}\in\X$, where $\overline{\mathbb{B}_{r}(x)}$ represents a bounded closed ball with $x$ as its center and $r$ as its radius. Obviously, $\xi$ is uniformly continuous on $[t,\tilde{t}]\times\overline{\mathbb{B}_{r}(x)}\times[z,\tilde{z}]$. Then, we claim that $\xi(t',x',z')$  converges to $\xi(t,x,z')$ as $t'\downarrow t$ and $x'\rightarrow x$, uniformly in $z'\in[z,\tilde{z}]$.
 
Indeed, let $\epsilon>0$, by uniform continuity, there is some $\delta>0$ such that  
$|\xi\left(t_{1},x_{1},z_{1}\right)-\xi\left(t_{2},x_{2},z_{2}\right)|<\epsilon$
 if $|t_{1}-t_{2}|+\lVert x_{1}-x_{2}\rVert+|z_{1}-z_{2}|<\delta$ and $\left(t_{1},x_{1},z_{1}\right),\left(t_{2},x_{2},z_{2}\right)\in [t,\tilde{t}]\times\overline{\mathbb{B}_{r}(x)}\times[z,\tilde{z}]$. 
Then 
$|\xi\left(t',x',z'\right)-\xi\left(t,x,z'\right)|<\epsilon$
if $(t'-t)+\lVert x'-x\rVert<\delta$, $t'>t$ and $z'\in[z,\tilde{z}]$.
 The above yields 
\begin{equation*}
   \sup\limits_{z'\in[z,\tilde{z}]} |\xi\left(t',x',z'\right)-\xi\left(t,x,z'\right)|<\epsilon,
\end{equation*}
which implies the desired uniform convergence.

\subsection{Proof of Lemma \ref{pH}}\label{app:pH}

Let $\xi\sim\mathcal{N}(0,1)$ and
\begin{equation*}
	h(y,z):=\mathbb{E}\left[F\left(\frac{e^{\sqrt{y}\xi}}{z}\right)\right],\quad y\geq0,\, z>0.
\end{equation*}
Clearly, $h(y,z)$ is strictly decreasing in $z\in(0,+\infty)$. By the dominated convergence theorem, $h$ is continuous on $[0,+\infty)\times(0,+\infty)$.  We are going to show that $h\in C^{1}\left([0,+\infty)\times(0,+\infty)\right)$ and $h_{z}(y,z)<0$ for each $(y,z)\in[0,+\infty)\times(0,+\infty)$, which combined with the implicit function theorem imply that $H$ is continuously differentiable on $[0,+\infty)$.
 By the dominated convergence theorem
\begin{equation*}
	\begin{split}
		&h_{y}(y,z)=\mathbb{E}\left[F'\left(\frac{e^{\sqrt{y}\xi}}{z}\right)\frac{e^{\sqrt{y}\xi}}{z}\frac{\xi}{2\sqrt{y}}\right],\quad{(y,z)\in(0,+\infty)\times(0,+\infty),}\\
		&h_{z}(y,z)=-\mathbb{E}\left[F'\left(\frac{e^{\sqrt{y}\xi}}{z}\right)\frac{e^{\sqrt{y}\xi}}{z^{2}}\right],\quad \quad{(y,z)\in[0,+\infty)\times(0,+\infty).}
	\end{split}
\end{equation*}
Obviously, $h_z<0$. { The continuity of $h_z$ is clear, by the polynomial growth of $F'$.}
It is left to show the continuity of $h_{y}$. Let us fix $\left(y_{0},z_{0}\right)\in(0,+\infty)\times(0,+\infty)$. Consider a set $\mathcal{I}:=[y_{1},y_{2}]\times[z_{1},z_{2}]$ such that $0<y_{1}<y_{0}<y_{2}<+\infty$ and $0<z_1<z_{0}<z_2<+\infty$. Then, by  the dominated convergence theorem, in order to prove $h_{y}$ is continuous at $\left(y_{0},z_{0}\right)$, it is enough to show that
\begin{equation*}
\mathbb{E}\left[\sup\limits_{(y,z)\in\mathcal{I}} F'\left(\frac{e^{\sqrt{y}\xi}}{z}\right)\frac{e^{\sqrt{y}\xi}}{z}\frac{\xi}{2\sqrt{y}}\right]	<+\infty,
\end{equation*}
 which is obvious by the polynomial growth of $F'$.
Then $h_{y}$ is continuous on $(0,+\infty)\times(0,+\infty)$ due to arbitrariness of $(y_{0},z_{0})$.

{
By the mean value theorem, for any $y>0$ and $z>0$,  there exists $y'\in(0,y)$ such that
\begin{equation*}
\begin{split}
      h_{y}(y^{2},z')=\frac{\mathbb{E}\left[F'\left(\frac{e^{y\xi}}{z'}\right)\frac{e^{y\xi}}{z'}\xi\right]}{2y}=\frac{1}{2}\mathbb{E}\left[F''\left(\frac{e^{y'\xi}}{z'}\right)\left(\frac{e^{y'\xi}}{z'}\right)^{2}\xi^{2}+F'\left(\frac{e^{y'\xi}}{z'}\right)\frac{e^{y'\xi}}{z'}\xi^{2}\right].
     \end{split}
\end{equation*}
Then, by the dominated convergence theorem,
\begin{equation*}
   \lim\limits_{(y,z')\rightarrow(0,z)}h_{y}(y^{2},z')=\frac{1}{2}\left(F''\left(\frac{1}{z}\right)\frac{1}{z^{2}}+F'\left(\frac{1}{z}\right)\frac{1}{z}\right).
\end{equation*}
Hence,
\begin{equation}\label{c0z}
   \lim\limits_{(y,z')\rightarrow(0,z)}h_{y}(y,z')=\frac{1}{2}\left(F''\left(\frac{1}{z}\right)\frac{1}{z^{2}}+F'\left(\frac{1}{z}\right)\frac{1}{z}\right).
\end{equation}
Similarly, by the mean value theorem, for any $y>0$ and $z>0$,  there exists $\eta\in(0,y)$ such that
\begin{equation}\label{con0}
    \frac{h(y,z)-h(0,z)}{y}=h_{y}(\eta,z).
\end{equation}
Then, letting $y\downarrow0$ yields 
\begin{equation*}
h_{y}(0,z)=\frac{1}{2}\left(F''\left(\frac{1}{z}\right)\frac{1}{z^{2}}+F'\left(\frac{1}{z}\right)\frac{1}{z}\right),
\end{equation*}
which, together with (\ref{c0z}), implies that
\begin{equation*}
    \lim\limits_{(y,z')\rightarrow(0,z)}h_{y}(y,z')=h_{y}(0,z).
\end{equation*}
Therefore, $h\in C^{1}\left([0,+\infty)\times(0,+\infty)\right)$.}

\subsection{Proof of Lemma \ref{sufflip}}\label{Appsufflip}
Recalling that $G$ is defined by \eqref{g}.
In order to prove that $G\in C^1([0,+\infty))$, it is enough to show that $g_{1}(y):=\mathbb{E}\left[e^{\sqrt{y}\xi}F'(e^{\sqrt{y}\xi}/H(y))\right]$ and $g_{2}(y):=\mathbb{E}\left[e^{2\sqrt{y}\xi}F''(e^{\sqrt{y}\xi}/H(y))\right]$ are  continuously differentiable on $[0,+\infty)$.  By the dominated convergence theorem, for each $y\in(0,+\infty)$,
\begin{equation*}
	\begin{split}
		&g_{1}'(y)=\mathbb{E}\left[e^{\sqrt{y}\xi}F'(e^{\sqrt{y}\xi}/H(y))\frac{\xi}{2\sqrt{y}}+e^{\sqrt{y}\xi}F''(e^{\sqrt{y}\xi}/H(y))\frac{e^{\sqrt{y}\xi}\frac{\xi}{2\sqrt{y}}H(y)-e^{\sqrt{y}\xi}H'(y)}{H^{2}(y)}\right],\\
		&g_{2}'(y)=\mathbb{E}\left[e^{2\sqrt{y}\xi}F''(e^{\sqrt{y}\xi}/H(y))\frac{\xi}{\sqrt{y}}+e^{2\sqrt{y}\xi}F^{(3)}(e^{\sqrt{y}\xi}/H(y))\frac{e^{\sqrt{y}\xi}\frac{\xi}{2\sqrt{y}}H(y)-e^{\sqrt{y}\xi}H'(y)}{H^{2}(y)}\right].
	\end{split}
\end{equation*}
Now we prove the continuity of $g_{2}'$. Let us fix $y_{0}\in(0,+\infty)$. Consider a set $\mathcal{I}:=[y_{1},y_{2}]$ such that $0<y_{1}<y_{0}<y_{2}<+\infty$. Then, by the dominated convergence theorem, in order to prove that $g_{2}'$ is continuous at $y_{0}$, it is enough to show that
\begin{equation*}
\begin{split}
    &\mathbb{E}\left[\sup\limits_{y\in\mathcal{I}}e^{2\sqrt{y}\xi}\bigg|F''(e^{\sqrt{y}\xi}/H(y))\frac{\xi}{\sqrt{y}}\bigg|\right]<+\infty,\\
     &\mathbb{E}\left[\sup\limits_{y\in\mathcal{I}}e^{3\sqrt{y}\xi}\bigg|F^{(3)}(e^{\sqrt{y}\xi}/H(y))\frac{\frac{\xi}{2\sqrt{y}}}{H(y)}\bigg|\right]<+\infty,\\
      &\mathbb{E}\left[\sup\limits_{y\in\mathcal{I}}e^{3\sqrt{y}\xi}\bigg|F^{(3)}(e^{\sqrt{y}\xi}/H(y))\frac{H'(y)}{H^{2}(y)}\bigg|\right]<+\infty,
\end{split}
\end{equation*}
 which are obvious by the polynomial growth of $F''$ and $F^{(3)}$.
Then $g_{2}'$ is continuous on $(0,+\infty)$ due to the arbitrariness of $y_{0}$. 

Note that
\begin{equation*}
 \lim\limits_{y\downarrow0}g_{2}'(y)= \lim\limits_{y\downarrow0}\frac{\mathbb{E}\left[e^{2\sqrt{y}\xi}F''(e^{\sqrt{y}\xi}/H(y))\xi\right]}{\sqrt{y}}+\lim\limits_{y\downarrow0}\frac{\mathbb{E}\left[e^{3\sqrt{y}\xi}F^{(3)}(e^{\sqrt{y}\xi}/H(y))\xi\right]}{2\sqrt{y}}-F^{(3)}(1)H'(0).
\end{equation*}
  Using L'H\^{o}pital's rule, we obtain  
  \begin{equation*}
  \begin{split}
         &\lim\limits_{y\downarrow0}\frac{\mathbb{E}\left[e^{2\sqrt{y}\xi}F''(e^{\sqrt{y}\xi}/H(y))\xi\right]}{\sqrt{y}}\\=& \lim\limits_{y\downarrow0}\frac{\mathbb{E}\left[e^{2\sqrt{y}\xi}F''(e^{\sqrt{y}\xi}/H(y))\xi^{2}\frac{1}{\sqrt{y}}\right]}{\frac{1}{2\sqrt{y}}}+\lim\limits_{y\downarrow0}\frac{\mathbb{E}\left[e^{3\sqrt{y}\xi}F^{(3)}(e^{\sqrt{y}\xi}/H(y))\xi^{2}\frac{1}{2\sqrt{y}}\right]}{\frac{1}{2\sqrt{y}}}\\=&2F''(1)+F^{(3)}(1).
  \end{split}
  \end{equation*}
Similarly, 
\begin{equation*}
    \lim\limits_{y\downarrow0}\frac{\mathbb{E}\left[e^{3\sqrt{y}\xi}F^{(3)}(e^{\sqrt{y}\xi}/H(y))\xi\right]}{2\sqrt{y}}=\frac{3}{2}F^{(3)}(1)+\frac{1}{2}F^{(4)}(1).
\end{equation*}
Arguing as (\ref{con0}), we have  
\begin{equation*}
    g_{2}'(0)=\lim\limits_{y\downarrow0}g_{2}'(y).
\end{equation*}
Therefore, $g_{2}\in C^{1}[0,+\infty)$. Similarly, one can  prove  $g_{1}\in C^{1}[0,+\infty)$.

\subsection{Proof of Theorem \ref{CRRA}}\label{proofofCRRA}
We now consider the following candidate state-independent strategy
\begin{equation}\label{eq:bu:a}
	\hat{\bu}(t,x)=\left(\sigma^{\top}(t)\right)^{-1}a(t),\quad (t,x)\in[0,T]\times(0,+\infty),
\end{equation}
where $a$ is a continuous function. 

\begin{lemma}\label{lemma31}
	Suppose that Assumptions \ref{a31} holds. Let $\hat{\bu}$ be given by \eqref{eq:bu:a}. Then, the following function 
	\begin{equation}
		f(t,x,z)=\mathbb{E}\left[F\left(\frac{X^{\hat{\bu}}(T)}{z}\right)\bigg|X^{\hat{\bu}}(t)=x\right],\quad (t,x,z)\in[0,T]\times(0,+\infty)\times(0,+\infty),
	\end{equation}
satisfies Assumption \ref{a3}. Moreover, we have  for each $(t,x,z)\in[0,T)\times(0,+\infty)\times(0,+\infty)$,
\begin{equation}\label{diff}
	\begin{split}
		&f_{x}(t,x,z)=\frac{1}{x}\mathbb{E}\left[F'\left(\frac{X^{\hat{\bu}}(T)}{z}\right)\frac{X^{\hat{\bu}}(T)}{z}\bigg|X^{\hat{\bu}}(t)=x\right],\\
		&f_{xx}(t,x,z)=\frac{1}{x^{2}}\mathbb{E}\left[F''\left(\frac{X^{\hat{\bu}}(T)}{z}\right)\left(\frac{X^{\hat{\bu}}(T)}{z}\right)^{2}\bigg|X^{\hat{\bu}}(t)=x\right].
	\end{split}
\end{equation}
\end{lemma}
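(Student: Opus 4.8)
The plan is to compute the conditional law of $X^{\hat{\bu}}(T)$ given $X^{\hat{\bu}}(t)=x$ explicitly, and then verify the conditions of Assumption \ref{a3} one at a time by differentiating under the expectation. Since $\hat{\bu}(t,x)=(\sigma^{\top}(t))^{-1}a(t)$ with $a$ continuous, the wealth SDE becomes $dX^{\hat{\bu}}(s)=X^{\hat{\bu}}(s)\big(a^{\top}(s)\kappa(s)\,ds+a^{\top}(s)\,dW(s)\big)$, so $X^{\hat{\bu}}(T)=x\exp\big(\int_t^T(a^{\top}(s)\kappa(s)-\tfrac12\lVert a(s)\rVert^2)ds+\int_t^T a^{\top}(s)dW(s)\big)$. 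Writing $m(t):=\int_t^T(a^{\top}\kappa-\tfrac12\lVert a\rVert^2)ds$ and $v(t):=\int_t^T\lVert a(s)\rVert^2 ds$, we get $X^{\hat{\bu}}(T)\overset{d}{=}x\,e^{m(t)+\sqrt{v(t)}\,\xi}$ with $\xi\sim\mathcal N(0,1)$ conditional on $X^{\hat{\bu}}(t)=x$. Hence $f(t,x,z)=\mathbb E\big[F\big(\tfrac{x}{z}e^{m(t)+\sqrt{v(t)}\xi}\big)\big]$, a deterministic function of $(m(t),v(t),x/z)$ with $m,v\in C^1([0,T])$ and $v\geq 0$ (indeed $v(t)>0$ for $t<T$ whenever $a\not\equiv 0$ near $T$, but we should not need strict positivity — the Gaussian integral is smooth in $v$ down to $v=0$, as in Lemma \ref{pH}).

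Next I would establish the differentiation formulas \eqref{diff}. The dominated convergence theorem justifies differentiating $f$ under the expectation in $x$: the formal $x$-derivatives bring down factors $F'(\cdot)\tfrac{X^{\hat{\bu}}(T)}{z}\tfrac1x$ and $F''(\cdot)(\tfrac{X^{\hat{\bu}}(T)}{z})^2\tfrac1{x^2}+\dots$, and the polynomial-growth bounds on $F'$, $F''$ from Assumption \ref{a31} combined with the moment estimates of Lemma \ref{lemmaonextimation} (all exponential moments of $X^{\hat{\bu}}(T)$ are finite because the log is Gaussian) give locally uniform integrable dominating functions on compact $(t,x,z)$-sets. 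The same scheme gives $f_t$ and $f_z$; in particular $f_z(t,x,z)=-\tfrac1z\,\mathbb E[F'(\tfrac{X^{\hat{\bu}}(T)}{z})\tfrac{X^{\hat{\bu}}(T)}{z}\mid X^{\hat{\bu}}(t)=x]<0$ by $F'>0$, giving Assumption \ref{a3}(iii). Polynomial growth of $f$ and its derivatives in $x$, local-uniformly in $z$ and uniformly in $t$ — i.e. Assumption \ref{a3}(i) — then follows by plugging the growth bound $|F^{(k)}(w)|\le L(1+w^\gamma+w^{-\gamma})$ into the formulas and using $\mathbb E[(X^{\hat{\bu}}(T)/x)^{\pm\gamma}\mid X^{\hat{\bu}}(t)=x]$ being bounded uniformly in $t$ (the Gaussian MGF with $m,v$ ranging over the compact set $[0,T]$). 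Right-continuity in $z$ of $f_t,f_x,f_{xx}$ (the remaining part of (iii)) is immediate from continuity of these expressions in $z$, again by dominated convergence.

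For Assumption \ref{a3}(ii), I would invoke Lemma \ref{remarktolemma}: it suffices to show that each of $f_t,f_x,f_{xx}$ is \emph{jointly continuous} on $[t_0,\tilde t_0]\times(0,\infty)\times(0,\infty)$ for a suitable $\tilde t_0>t_0$. This is where the main work lies, but it is again a routine dominated-convergence argument: each derivative is, via the Gaussian representation, of the form $\mathbb E[\Phi(t,x,z,\xi)]$ where $\Phi$ is jointly continuous in $(t,x,z)$ for a.e.\ $\xi$ (continuity of $m(t),v(t),\sqrt{v(t)}$ and of $F,F',F''$), and on any compact $(t,x,z)$-box $\sup|\Phi(t,x,z,\xi)|$ is integrable by the polynomial-growth bounds and Gaussian moments. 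So I would state the representation $X^{\hat{\bu}}(T)\overset{d}{=}x e^{m(t)+\sqrt{v(t)}\xi}$, record the derivative formulas, and then carry out these three verifications (growth, joint continuity, sign and right-continuity of $z$-section) in turn.

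The main obstacle is bookkeeping rather than conceptual: one must be careful that all the dominating functions are integrable \emph{uniformly} over the relevant compact parameter sets and over $t\in[0,T]$, which requires exploiting that $m$ and $v$ are bounded (being continuous on $[0,T]$) so that the exponential moments $\mathbb E[e^{\theta\sqrt{v(t)}\xi}]=e^{\theta^2 v(t)/2}$ are uniformly bounded. A minor subtlety is the behaviour as $t\uparrow T$, where $v(t)\downarrow 0$ and the $\tfrac{1}{2\sqrt{v(t)}}$ factors appearing if one differentiates in $t$ blow up; this is handled exactly as in the proof of Lemma \ref{pH} (rewrite, use the mean value theorem / L'Hôpital, and note the limit exists), but since Assumption \ref{a3}(i) only requires $C^{1,2}$ on $[t,\tilde t]$ with $\tilde t<T$, one may simply take $\tilde t<T$ and avoid the endpoint altogether.
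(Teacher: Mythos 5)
Your proposal follows essentially the same route as the paper: the explicit lognormal representation $X^{\hat{\bu}}(T)\overset{d}{=}x\,e^{B(t)+\sqrt{A(t)}\,\xi}$ conditional on $X^{\hat{\bu}}(t)=x$, differentiation under the expectation justified by dominated convergence using the polynomial growth of $F,F',F''$ and Gaussian moments, Lemma \ref{remarktolemma} (joint continuity of $f_t,f_x,f_{xx}$) for Assumption \ref{a3}(ii), and the negativity of $f_z$ from $F'>0$ for (iii). The only point where the paper is more careful is the degeneracy $A=0$: it splits at $t_0=\inf\{t:A(t)=0\}$, where $f(t,x,z)=F(x/z)$ makes the verification trivial, and for $t<t_0$ chooses $\tilde t\in(t,t_0)$ so that $A$ is bounded away from zero on $[t,\tilde t]$ (controlling the $A'/\sqrt{A}$ factor); your remark that ``taking $\tilde t<T$'' avoids the problem does not by itself cover $a$ vanishing on a terminal interval, but your alternative appeal to smoothness of the Gaussian integral down to $v=0$ (as in Lemma \ref{pH}) handles that case, so the argument is sound.
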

\begin{proof}
   For the strategy $\hat{\bu}$ given by \eqref{eq:bu:a}, we have
   \begin{equation*}
   	X^{\hat{\bu}}(T)=X^{\hat{\bu}}(t) e^{B(t)+\int_{t}^{T}a^{\top}(s)dW(s)},
   \end{equation*}
where 
\begin{equation}\label{eq:AB}
A(t):=\int_{t}^{T}||a(s)||^{2}ds\quad\text{and}\quad
B(t):=\int_{t}^{T}a^{\top}(s)\kappa(s)ds-\frac{1}{2}A(t).
\end{equation}
Then, we obtain
\begin{equation*}
	f(t,x,z)=\mathbb{E}\left[F\left(\frac{x}{z}e^{B(t)+\sqrt{A(t)}\xi}\right)\right],
\end{equation*}
where $\xi\sim\mathcal{N}(0,1)$.

Let $t_{0}=\inf\{t\in[0,T]\mid A(t)=0\}$. Obviously, $A(t)=0$ and $f(t,x,z)=F(\frac{x}{z})$, for any $(t,x,z)\in[t_{0},T]\times(0,+\infty)\times(0,+\infty)$. The verification of Assumption \ref{a3} for $t\in[t_{0},T)$ is trivial. Therefore, we only need to consider the case $t_0>0$ and those $t\in[0,t_0)$ in the verification of Assumption \ref{a3}.

\paragraph{Verification of Assumption \ref{a3} (i)} 
Assume $t\in[0,t_0)$. Let $\tilde{t}\in(t,t_0)$.  We claim that, for each $(s,x,z)\in[t,\tilde{t}]\times(0,+\infty)\times(0,+\infty)$,
\begin{equation}\label{diffft}
f_{t}(s,x,z)=\mathbb{E}\left[F'\left(\frac{x}{z} e^{B(s)+\sqrt{A(s)}\xi}\right)\left(\frac{x}{z} e^{B(s)+\sqrt{A(s)}\xi}\right)\left(B'(s)+\frac{1}{2}\frac{A'(s)}{\sqrt{A(s)}}\xi\right)\right].
\end{equation}
To prove this claim, according to the dominated convergence theorem, it is enough to show that, for each $(x,z)\in(0,+\infty)^{2}$,
\begin{equation*}
C(x,z):=\mathbb{E}\left[\sup_{s\in[t,\tilde{t}]}F'\left(\frac{x}{z} e^{B(s)+\sqrt{A(s)}\xi}\right)\left(\frac{x}{z} e^{B(s)+\sqrt{A(s)}\xi}\right)\left|B'(s)+\frac{1}{2}\frac{A'(s)}{\sqrt{A(s)}}\xi\right|\right]<+\infty.
\end{equation*}
Indeed, using the Cauchy-Schwarz inequality, we have
\begin{equation*}
	\begin{split}
	C(x,z)\leq&\mathbb{E}\left[\sup_{s\in[t,\tilde{t}]}\left(F'\left(\frac{x}{z} e^{B(s)+\sqrt{A(s)}\xi}\right)\right)^{2}\left(\frac{x}{z} e^{B(s)+\sqrt{A(s)}\xi}\right)^{2}\right]^{\frac{1}{2}}\\&\times\mathbb{E}\left[\sup_{s\in[t,\tilde{t}]}\left(B'(s)+\frac{1}{2}\frac{A'(s)}{\sqrt{A(s)}}\xi\right)^{2}\right]^{\frac{1}{2}}.
	\end{split}
\end{equation*}
Obviously, 
\begin{equation}\label{ineq2}
	\mathbb{E}\left[\sup_{s\in[t,\tilde{t}]}\left(B'(s)+\frac{1}{2}\frac{A'(s)}{\sqrt{A(s)}}\xi\right)^{2}\right]<+\infty,
\end{equation}
and  we conclude from Assumption \ref{a31} that
\begin{equation}\label{ineqa1}
	\begin{split}
		&\mathbb{E}\left[\sup_{s\in[t,\tilde{t}]}\left(F'\left(\frac{x}{z} e^{B(s)+\sqrt{A(s)}\xi}\right)\right)^{2}\left(\frac{x}{z} e^{B(s)+\sqrt{A(s)}\xi}\right)^{2}\right]\\\leq& C_1\left(1+\left(\frac{x}{z}\right)^{\gamma}+\left(\frac{x}{z}\right)^{-\gamma}\right)^{2}\left(\frac{x}{z}\right)^{2}\mathbb{E}\left[\left(e^{\gamma C_1\xi}+1\right)^{2}\left(e^{2C_1\xi}+1\right)^{2}\right]<+\infty,
	\end{split}
\end{equation}
where $C_1>0$ is a constant. Hence, the claim follows.
  Moreover, by (\ref{ineq2}) and (\ref{ineqa1}) we conclude that $f_{t}$ is of polynomial growth in $x\in(0,+\infty)$, local-uniformly in $z\in(0,+\infty)$ and uniformly in $s\in[t,\tilde{t}]$.  We will show  $f_{t}\in C\left([t,\tilde{t}]\times(0,+\infty)\times(0,+\infty)\right)$ in the next paragraph, which, together with the above,  implies that $f_{t}$ satisfies Assumption \ref{a3}(i).  Similar arguments apply to the verification for $f_{x}$ and $f_{xx}$.
  
  \paragraph{Verification of Assumption \ref{a3} (ii)}
  For any $(s,x,z)\in[t,\tilde{t}]\times(0,+\infty)\times(0,+\infty)$, consider the set $\mathcal{I}:=[x_1,x_2]\times[z_1,z_2]$ such that 
  $0<x_1<x<x_2<+\infty$ and $0<z_1<z<z_2<+\infty$. In a similar way as in the previous paragraph, we know  
  \begin{equation*}
      \mathbb{E}\left[\sup\limits_{(s,x,z)\in[t,\tilde{t}]\times\mathcal{I}}F'\left(\frac{x}{z}e^{B(s)+\sqrt{A(s)}\xi}\right)\left(\frac{x}{z} e^{B(s)+\sqrt{A(s)}\xi}\right)\left|B'(s)+\frac{1}{2}\frac{A'(s)}{\sqrt{A(s)}}\xi\right|\right]<+\infty.
  \end{equation*}
  Then, the dominated convergence theorem implies that $f_{t}$ is continuous at $(s,x,z)$. Therefore, $f_{t}\in C\left([t,\tilde{t}]\times(0,+\infty)\times(0,+\infty)\right)$ by the arbitrariness of $(s,x,z)$. Consequently,  by Lemma \ref{remarktolemma}, $f_{t}$ satisfies  Assumption \ref{a3}(ii). Similar arguments apply to the verification for $f_{x}$ and $f_{xx}$.
  
\paragraph{Verification of Assumption \ref{a3} (iii)} Similar to (\ref{diffft}), we can obtain, for each $(t,x,z)\in[0,T)\times(0,+\infty)\times(0,+\infty)$,
\begin{equation*}
    f_{z}(t,x,z)=-\frac{1}{z^{2}}\mathbb{E}\left[F'\left(\frac{X^{\hat{\bu}}(T)}{z}\right)X^{\hat{\bu}}(T)\bigg|X^{\hat{\bu}}(t)=x\right],
\end{equation*}
which is obviously negative. Moreover, as  $f_{t}\in C\left([t,\tilde{t}]\times(0,+\infty)\times(0,+\infty)\right)$ for any $t\in[0,T)$ with $\tilde{t}\in(t,T)$, we have that $f_{t}(t,x,z)$ is right continuous in $z\in(0,+\infty)$ for any $(t,x)\in[0,T)\times(0,+\infty)$. Similar arguments apply to show the right continuity of $f_{x}$ and $f_{xx}$ in $z\in(0,+\infty)$.

Finally, the proof of (\ref{diff}) is similar to (\ref{diffft}), and we omit it here.
\end{proof}

\begin{lemma}\label{lma:GA}
Suppose that Assumptions \ref{a31} holds. Let $\hat{\bu}$ be given by \eqref{eq:bu:a} and $A$ be given by \eqref{eq:AB}.
Then
$$-\frac{f_{x}(t,x,J(t,x;\hat{\bu}))}{xf_{xx}(t,x,J(t,x;\hat{\bu}))}=G(A(t)), \quad \forall\,(t,x)\in[0,T)\times(0,+\infty).
$$
\end{lemma}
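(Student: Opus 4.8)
The plan is to compute $f_x$ and $f_{xx}$ explicitly at $z = J(t,x;\hat{\bu})$ using the representations in \eqref{diff}, and then recognize the resulting quotient as $G(A(t))$ by comparing with the definition \eqref{g}. The crucial observation is that the CRRA structure makes the objective function $J(t,x;\hat{\bu})$ scale linearly in $x$, which is what allows the $x$-dependence to cancel in the ratio $-f_x/(x f_{xx})$.

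\textbf{Step 1: Identify $J(t,x;\hat{\bu})$.} Using $X^{\hat{\bu}}(T) = X^{\hat{\bu}}(t)\, e^{B(t) + \sqrt{A(t)}\,\xi}$ with $\xi \sim \mathcal{N}(0,1)$ (from the proof of Lemma \ref{lemma31}), the defining equation \eqref{G1} reads $\mathbb{E}\left[F\!\left(\frac{x}{z} e^{B(t)+\sqrt{A(t)}\,\xi}\right)\right] = 0$. Writing $\frac{x}{z} e^{B(t)+\sqrt{A(t)}\,\xi} = \frac{x e^{B(t)}}{z}\, e^{\sqrt{A(t)}\,\xi}$ and comparing with the defining equation of $H$, namely $\mathbb{E}\left[F\!\left(\frac{e^{\sqrt{A(t)}\,\xi}}{H(A(t))}\right)\right]=0$, the strict monotonicity of $F$ (Assumption \ref{a31}) forces $\frac{z}{x e^{B(t)}} = H(A(t))$, i.e. $J(t,x;\hat{\bu}) = x e^{B(t)} H(A(t))$. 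In particular $\frac{X^{\hat{\bu}}(T)}{J(t,x;\hat{\bu})} = \frac{e^{\sqrt{A(t)}\,\xi}}{H(A(t))}$ in distribution, conditionally on $X^{\hat{\bu}}(t)=x$.

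\textbf{Step 2: Substitute into \eqref{diff}.} Plugging $z = J(t,x;\hat{\bu})$ into the formulas from Lemma \ref{lemma31} and using the distributional identity from Step 1,
\begin{align*}
f_x(t,x,J(t,x;\hat{\bu})) &= \frac{1}{x}\,\mathbb{E}\!\left[F'\!\left(\tfrac{e^{\sqrt{A(t)}\xi}}{H(A(t))}\right)\tfrac{e^{\sqrt{A(t)}\xi}}{H(A(t))}\right],\\
f_{xx}(t,x,J(t,x;\hat{\bu})) &= \frac{1}{x^2}\,\mathbb{E}\!\left[F''\!\left(\tfrac{e^{\sqrt{A(t)}\xi}}{H(A(t))}\right)\left(\tfrac{e^{\sqrt{A(t)}\xi}}{H(A(t))}\right)^{2}\right].
\end{align*}
Hence $-\dfrac{f_x(t,x,J(t,x;\hat{\bu}))}{x f_{xx}(t,x,J(t,x;\hat{\bu}))} = \dfrac{H(A(t))\,\mathbb{E}\!\left[e^{\sqrt{A(t)}\xi}F'\!\left(e^{\sqrt{A(t)}\xi}/H(A(t))\right)\right]}{-\mathbb{E}\!\left[e^{2\sqrt{A(t)}\xi}F''\!\left(e^{\sqrt{A(t)}\xi}/H(A(t))\right)\right]}$, which is exactly $G(A(t))$ by definition \eqref{g}. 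This holds for all $(t,x)\in[0,T)\times(0,+\infty)$, completing the proof.

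\textbf{Main obstacle.} The only genuine subtlety is Step 1: justifying that the conditional law of $X^{\hat{\bu}}(T)/x$ given $X^{\hat{\bu}}(t)=x$ depends on $t$ only through $A(t)$ and $B(t)$, and that the ratio $X^{\hat{\bu}}(T)/J(t,x;\hat{\bu})$ is then free of $x$. Both follow from the explicit log-normal form of $X^{\hat{\bu}}(T)$ already derived in the proof of Lemma \ref{lemma31} together with the uniqueness of $J$ guaranteed by Lemma \ref{lemmaobj}; the denominator of $G$ is strictly positive because $F''<0$, so the quotient is well defined. Everything else is bookkeeping with the formulas of Lemma \ref{lemma31}.
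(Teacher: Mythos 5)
Your proposal is correct and follows essentially the same route as the paper: first derive $J(t,x;\hat{\bu})=xe^{B(t)}H(A(t))$ from the log-normal form of $X^{\hat{\bu}}(T)$ and the defining equation of $H$, then substitute into the formulas \eqref{diff} and match the resulting ratio with the definition \eqref{g} of $G$. The extra remarks on uniqueness of $J$ and strict negativity of $F''$ are fine but not a departure from the paper's argument.
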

\begin{proof}
Based on the definition of the objective function, we have
\begin{equation*}
0=\mathbb{E}\left[F\left(\frac{X^{\hat{\bu}}(T)}{J(t,x;\hat{\bu})}\right)\bigg|X^{\hat{\bu}}(t)=x\right]=\mathbb{E}\left[F\left(\frac{xe^{B(t)+\sqrt{A(t)}\xi}}{J(t,x;\hat{\bu})}\right)\right],
\end{equation*}
where $A$ and $B$ are  given by \eqref{eq:AB}
and $\xi\sim\mathcal{N}(0,1)$. Consequently,
\begin{equation}\label{ph}
J(t,x;\hat{\bu})=xe^{B(t)}H(A(t)),\quad
(t,x)\in[0,T]\times(0,+\infty).
\end{equation}
A combination of (\ref{diff}) and (\ref{ph}) yields  
\begin{equation*}\label{GA}
    \begin{split}
        -\frac{f_{x}(t,x,J(t,x;\hat{\bu}))}{xf_{xx}(t,x,J(t,x;\hat{\bu}))}=\frac{H(A(t))\mathbb{E}\left[e^{\sqrt{A(t)}\xi}F'\left(e^{\sqrt{H(A(t))}\xi}/H(A(t))\right)\right]}{-\mathbb{E}\left[e^{2\sqrt{A(t)}\xi}F''\left(e^{\sqrt{H(A(t))}\xi}/H(A(t))\right)\right]}=G(A(t))
    \end{split}
\end{equation*}
for all $(t,x)\in[0,T)\times(0,+\infty)$.
\end{proof}

We are now ready to present the proof of the Theorem \ref{CRRA}.

\begin{proof}[Proof of Theorem \ref{CRRA}]
Let $\hat{\bu}$ be given by \eqref{eq:bu:a}.
By Lemma \ref{lemma31}, $f$ satisfies Assumption \ref{a3}.
Then by Theorem \ref{sn} and Remark \ref{remarkhjb},  $\hat{\bu}$ is an equilibrium strategy if and only if
	\begin{align}
		\nonumber &a(t)=\arg\max\limits_{a\in\sigma^{\top}(t)\mathbb{U}}\Big\{f_{t}(t,x,J(t,x;\hat{\bu}))+a^{\top}\kappa(t)xf_{x}(t,x,J(t,x;\hat{\bu}))\\
		&\qquad\qquad\qquad\qquad+\frac{1}{2}\lVert a\rVert^{2}x^{2}f_{xx}(t,x,J(t,x;\hat{\bu}))\Big\}, \quad t\in[0,T),	\label{pi}\\
\nonumber	&f_{t}(t,x,J(t,x;\hat{\bu}))+a^{\top}(t)\kappa(t) xf_{x}(t,x,J(t,x;\hat{\bu}))\\
&\qquad\qquad+\frac{1}{2}\lVert a(t)\rVert^{2}x^{2}f_{xx}(t,x,J(t,x;\hat{\bu}))=0,\quad (t,x)\in[0,T)\times(0,+\infty).\label{fc}
\end{align}

 We claim that (\ref{fc}) always holds. Indeed, by Lemma \ref{lemma31}, we know that $f$ satisfies Assumption \ref{a3}(i). Then, for any sufficiently small $\epsilon>0$, 
\begin{equation*}
\begin{split}
     f(t,x,J(t,x;\hat{\bu}))=\mathbb{E}\left[f(t+\epsilon,X^{\hat{\bu}}(t+\epsilon),J(t,x;\hat{\bu}))\bigg|X^{\hat{\bu}}(t)=x\right],
\end{split}
\end{equation*}
which implies that by arguing as (\ref{ito})
\begin{equation*}
    \int_{t}^{t+\epsilon}\mathbb{E}\left[\A^{\hat{\bu}}f(s,X^{\hat{\bu}}(s),J(t,x;\hat{\bu}))\bigg|X^{\hat{\bu}}(t)=x\right]ds=0.
\end{equation*}
Dividing $\epsilon$ and then letting $\epsilon\downarrow0$ yields (\ref{fc}).

Therefore, it is enough to consider equation (\ref{pi}). Note that for each $(t,x)\in[0,T)\times(0,+\infty)$, 
\begin{equation*}
\begin{split}
&f_{t}(t,x,J(t,x;\hat{\bu}))+a^{\top}\kappa(t)xf_{x}(t,x,J(t,x;\hat{\bu}))+\frac{1}{2}\lVert a\rVert^{2}x^{2}f_{xx}(t,x,J(t,x;\hat{\bu}))\\
=&\frac{1}{2}x^{2}f_{xx}(t,x,J(t,x;\hat{\bu}))\bigg\lVert a+\frac{f_{x}(t,x,J(t,x;\hat{\bu}))}{xf_{xx}(t,x,J(t,x;\hat{\bu}))}\kappa(t)\bigg\rVert^{2}+f_{t}(t,x,J(t,x;\hat{\bu}))\\
-&\frac{1}{2}\frac{\left(xf_{x}(t,x,J(t,x;\hat{\bu}))\right)^{2}}{x^{2}f_{xx}(t,x,J(t,x;\hat{\bu}))}\lVert\kappa(t)\rVert^{2}.
\end{split}
\end{equation*}
As $F''(x)<0$ for all $x\in(0,+\infty)$, we conclude from (\ref{diff}) that $f_{xx}(t,x,J(t,x;\hat{\bu}))<0$. Hence, we know that \eqref{pi} is equivalent to
\begin{equation}\label{pi2}
	\begin{split}
		a(t)&=P_{t}\left(-\frac{f_{x}(t,x,J(t,x;\hat{\bu}))}{xf_{xx}(t,x,J(t,x;\hat{\bu}))}\kappa(t)\right),
	\end{split}
\end{equation}
which, by Lemma \ref{lma:GA}, is further equivalent to
\begin{equation*}
	\begin{split}
		a(t)&=P_{t}\left(\kappa(t)G(A(t))\right).
	\end{split}
\end{equation*}
Hence, we know that $A$ satisfies ODE \eqref{ode0}, 
which admits a unique solution on $[0,T]$ by assumption. Therefore, $\hat{\bu}$ defined by (\ref{hatpi}) is the unique continuous state-independent equilibrium strategy.
\end{proof}

\subsection{Proof of Lemma \ref{well}}\label{AA}
	Note that  ODE (\ref{ode0}) is equivalent to the following ODE
\begin{equation}\label{ode3}
	\begin{cases}
		A'(t)=\lVert P_{T-t}\left(\kappa(T-t)G(A(t))\right)\rVert^{2},\quad t\in(0,T],\\
		A(0)=0.
	\end{cases}
\end{equation}
As the projection $P$ is a contraction, $\kappa$ is bounded and $G$ is locally Lipschitz, it follows that $$\lVert P_{T-t}\left(\kappa(T-t)G(A)\right)\rVert^{2}$$ is also locally Lipschitz in $A\in[0,+\infty)$, uniformly in $t\in[0,T]$.
Therefore, by \cite[Theorem 7.4]{Amannbook90}, the local wellposedness of  ODE (\ref{ode3}) is obtained. 

Now, we show that the local solution does not blow up under the condition that $\mathcal{Q}^{-1}(T)<\infty$. Note that 
\begin{equation*}
	\begin{split}
			\lVert P_{T-t}\left(\kappa(T-t)G(A(t))\right)\rVert^{2}&\leq 2\lVert P_{T-t}\left(\kappa(T-t)G(A(t))\right)-\kappa(T-t)G(A(t))\rVert^{2}\\
			&+ 2\lVert \kappa(T-t)G(A(t))\rVert^{2}\\&\leq 2\lVert\sigma^{\top}(T-t)\beta -\kappa(T-t)G(A(t))\rVert^{2}+ 2\lVert \kappa(T-t)G(A(t))\rVert^{2}\\&\leq
			6c_{3}(G(A(t)))^{2}+4c_{2}\lVert\beta\rVert^{2}.
	\end{split}
\end{equation*}
Then 
\begin{equation*}
	\begin{split}
		A'(t)\leq 6c_{3}(G(A(t)))^{2}+4c_{2}\lVert\beta\rVert^{2}, \quad t\in(0,T],
	\end{split}
\end{equation*}
yielding  
\begin{equation*}
	\mathcal{Q}(A(t))\leq t,\quad t\in[0,T].
\end{equation*}
As $\mathcal{Q}$ is strictly increasing, we have 
\begin{equation*}
	A(t)\leq \mathcal{Q}^{-1}(t)<\mathcal{Q}^{-1}(T)<+\infty, \quad t\in[0,T],
\end{equation*}
which guarantees the grobal wellposedness of ODE (\ref{ode0}).

\subsection{Proof of Theorem \ref{CRRA2}}\label{proofCRRA2}
We now consider a fixed continuous state-independent strategy $\hat{\bu}$: 
$$\hat{\bu}(t,x)=\hat{\bu}(t),\quad (t,x)\in[0,T]\times(0,+\infty).$$ Following the same procedure as in proving Lemma \ref{lemma31}, we know that the function $f$ defined by 
\begin{equation*}
	f(t,x,z)=\mathbb{E}\left[F\left(\frac{X^{\hat{\bu}}(T)}{z}\right)\bigg|X^{\hat{\bu}}(t)=x\right],\quad (t,x,z)\in[0,T]\times(0,+\infty)\times(0,+\infty),
\end{equation*}
satisfies Assumption \ref{a3} and (\ref{diff}).
Moreover, in a similar way to \eqref{ph}, we  have 
\begin{equation}\label{ph2}
	J(t,x;\hat{\bu})=xe^{B(t)}H(A(t)),\,(t,x)\in[0,T]\times(0,+\infty),
\end{equation}
where
\begin{equation*}
	\begin{split}
		A(t)=&\int_{t}^{T}||\sigma^{\top}(s)\hat{\bu}(s)||^{2}ds,\\ B(t)=&\int_{t}^{T}\Big(\big(1-\textbf{1}\cdot\hat{\bu}(r)\big)_+r(s)
		-\big(\textbf{1}\cdot\hat{\bu}(s)-1\big)_+R(s)+\hat{\bu}^\top(s)\mu(s)\Big)ds-\frac{1}{2}A(t).
	\end{split}
\end{equation*}
{ By a similar way to prove Lemma \ref{lma:GA}, we  obtain from  (\ref{ph2}) and (\ref{diff}) that 
\begin{equation}\label{eq:GA:rR}
    \begin{split}
        -\frac{f_{x}(t,x,J(t,x;\hat{\bu}))}{xf_{xx}(t,x,J(t,x;\hat{\bu}))}=G(A(t)),\quad (t,x)\in[0,T)\times(0,+\infty).
    \end{split}
\end{equation}}

We are now ready to present the proof of the Theorem \ref{CRRA2}.
\begin{proof}[Proof of Theorem \ref{CRRA2}]
Let
\begin{equation*}
\begin{split}
h(t,x,u)=&xf_{x}(t,x,J(t,x;\hat{\bu}))\Big(\big(1-\textbf{1}\cdot u\big)_+r(t)
-\big(\textbf{1}\cdot u-1\big)_+R(t)+u^\top\mu(t)\Big)\\
&  \quad +\frac{1}{2}\lVert\sigma^{\top}(t)u\rVert^{2}x^{2}f_{xx}(t,x,J(t,x;\hat{\bu})),\quad (t,x)\in[0,T]\times(0,+\infty), u\in\mathbb{U}.
\end{split}
\end{equation*}
By Theorem \ref{sn}, $\hat{\bu}$ is an equilibrium strategy if and only if
	\begin{align}
		 &\hat{\bu}(t)=\arg\max\limits_{u\in\mathbb{R}^{d}}\Big\{f_{t}(t,x,J(t,x;\hat{\bu}))+h(t,x,u)\Big\}, \quad (t,x)\in[0,T)\times(0,+\infty),	\label{2pi}\\
		\nonumber	&f_{t}(t,x,J(t,x;\hat{\bu}))+h(t,x,\hat{\bu}(t))=0, \quad (t,x)\in[0,T)\times(0,+\infty)	\nonumber.
	\end{align}
Similar to the proof of Theorem \ref{CRRA},  it is enough to consider the fixed point equation (\ref{2pi}). 
For any $(t,x)\in[0,T]\times(0,+\infty)$ and $u\in\mathbb{U}$, let
\begin{equation*}
	\begin{split}
		h_1(t,x,u)=&xf_{x}(t,x,J(t,x;\hat{\bu}))\Big(r(t)+u^{\top}(\mu(t)-r(t)\mathbf{1})\Big)+\frac{1}{2}\lVert\sigma^{\top}(t)u\rVert^{2}x^{2}f_{xx}(t,x,J(t,x;\hat{\bu})),\\
		h_2(t,x,u)=&xf_{x}(t,x,J(t,x;\hat{\bu}))\Big(R(t)+u^{\top}(\mu(t)-R(t)\mathbf{1})\Big)+\frac{1}{2}\lVert\sigma^{\top}(t)u\rVert^{2}x^{2}f_{xx}(t,x,J(t,x;\hat{\bu})).
	\end{split}
\end{equation*}
Obviously,
\begin{equation*}
	h(t,x,u)=\begin{cases}
		h_1(t,x,u),\quad \textbf{1}\cdot u\leq1,\\
		h_2(t,x,u),\quad \textbf{1}\cdot u>1.
	\end{cases}
\end{equation*}
To proceed, we first consider $\sup_{u\in\mathbb{R}^d}h_1(t,x,u)$ and $\sup_{u\in\mathbb{R}^d}h_2(t,x,u)$, respectively. Obviously,
\begin{align*}
{h_1(t,x,u)\over-x^2f_{xx}(t,x,J(t,x;\hat{\bu}))}
=&-\frac{1}{2}\bigg\lVert\sigma^{\top}(t)u+\frac{f_{x}(t,x,J(t,x;\hat{\bu}))}{xf_{xx}(t,x,J(t,x;\hat{\bu}))}\kappa_{1}(t)\bigg\rVert^{2}\\&+\frac{1}{2}\frac{\left(f_{x}(t,x,J(t,x;\hat{\bu}))\right)^{2}}{\big(xf_{xx}(t,x,J(t,x;\hat{\bu}))\big)^2}\lVert\kappa_{1}(t)\rVert^{2}-{f_{x}(t,x,J(t,x;\hat{\bu}))\over x f_{xx}(t,x,J(t,x;\hat{\bu}))}r(t),\\=&-\frac{1}{2}\bigg\lVert\sigma^{\top}(t)u-G(A(t))\kappa_{1}(t)\bigg\rVert^{2}+\frac{1}{2}(G(A(t))^2\lVert\kappa_{1}(t)\rVert^{2}-G(A(t))r(t).
\end{align*}	
Similarly,
\begin{align*}
{h_2(t,x,u)\over-x^2f_{xx}(t,x,J(t,x;\hat{\bu}))}
=-\frac{1}{2}\bigg\lVert\sigma^{\top}(t)u-G(A(t))\kappa_{2}(t)\bigg\rVert^{2}+\frac{1}{2}(G(A(t))^2\lVert\kappa_{2}(t)\rVert^{2}-G(A(t))R(t).
\end{align*}	
As $f_{xx}(t,x,J(t,x;\hat{\bu}))<0$, we obtain  
\begin{equation*}
	\begin{split}
		&\hat{\bu}_{1}(t):=\arg\max\limits_{u\in\mathbb{R}^d}h_1(t,x,u)=G(A(t))\left(\sigma^{\top}(t)\right)^{-1}\kappa_{1}(t),\\
		&\hat{\bu}_{2}(t):=\arg\max\limits_{u\in\mathbb{R}^d}h_2(t,x,u)=G(A(t))\left(\sigma^{\top}(t)\right)^{-1}\kappa_{2}(t).
		\end{split}
\end{equation*}

Now we consider $\sup_{u\in\mathbb{R}^d}h(t,x,u)$.
Clearly, $\textbf{1}\cdot\hat{\bu}_{2}(t)\leq\textbf{1}\cdot\hat{\bu}_{1}(t)$ for $t\in[0,T]$. Moreover,  $h_{1}(t,x,u)=h_{2}(t,x,u)=h_0(t,x,u)$ for any $(t,x)\in[0,T]\times(0,+\infty)$ if $\textbf{1}\cdot u=1$, where $h_0(t,x,u)$ is defined by
\begin{equation*}
h_0(t,x,u)=xf_{x}(t,x,J(t,x;\hat{\bu}))u^{\top}\mu(t)+\frac{1}{2}\lVert\sigma^{\top}(t) u\rVert^{2}x^{2}f_{xx}(t,x,J(t,x;\hat{\bu})).
\end{equation*}
Then, we have the following three separated cases.
\begin{description}
\item[Case 1:]\label{case1} $1\leq\textbf{1}\cdot\hat{\bu}_{2}(t)$. In this case, we first claim that $$u_{1}(t,x):=\arg\max\limits_{\textbf{1}\cdot u\leq1}h(t,x,u)=\arg\max\limits_{\textbf{1}\cdot u\leq1}h_{1}(t,x,u)=\arg\max\limits_{\textbf{1}\cdot u=1}h_1(t,x,u)=\arg\max\limits_{\textbf{1}\cdot u=1}h_2(t,x,u).$$
Indeed, suppose on the contrary that $\textbf{1}\cdot u_{1}(t,x)<1$, then $u_1(t,x)$ locally and hence globally maximizes $h_{1}(t,x,\cdot)$ by the strict concavity of $h_{1}(t,x,\cdot)$.
Consequently, $u_{1}(t,x)=\hat{\bu}_{1}(t)$, which is impossible as $\textbf{1}\cdot\hat{\bu}_{1}(t)\geq\textbf{1}\cdot\hat{\bu}_{2}(t)\geq1$. 
Then we have 
$\hat{\bu}(t)=\arg\max\limits_{\textbf{1}\cdot u\geq1}h_2(t,x,u)=\hat{\bu}_{2}(t)=G(A(t))\left(\sigma^{\top}(t)\right)^{-1}\kappa_{2}(t)$.

\item[Case 2:]$\textbf{1}\cdot\hat{\bu}_{1}(t)\leq1$. In this case, a similar argument as in {Case 1} yields
$$u_{2}(t,x):=\arg\max\limits_{\textbf{1}\cdot u\geq1}h(t,x,u)=\arg\max\limits_{\textbf{1}\cdot u\geq1}h_{2}(t,x,u)=\arg\max\limits_{\textbf{1}\cdot u=1}h_2(t,x,u)=\arg\max\limits_{\textbf{1}\cdot u=1}h_1(t,x,u).$$

Hence, we have $\hat{\bu}(t)=\arg\max\limits_{\textbf{1}\cdot u\leq1}h_1(t,x,u)=\hat{\bu}_{1}(t)=G(A(t))\left(\sigma^{\top}(t)\right)^{-1}\kappa_{1}(t)$.

\item[Case 3:] $\textbf{1}\cdot\hat{\bu}_{2}(t)< 1< \textbf{1}\cdot\hat{\bu}_{1}(t)$. 
In this case, suppose $\textbf{1}\cdot\hat{\bu}(t)<1$. Then
$\hat{\bu}(t)$ locally 
and hence
globally maximizes $h_1(t,x,\cdot)$, which implies $\hat{\bu}(t)=\hat{\bu}_1(t)$,
contradicting $1< \textbf{1}\cdot\hat{\bu}_{1}(t)$. Therefore,
$\textbf{1}\cdot\hat{\bu}(t)\ge 1$. Similarly, 
$\textbf{1}\cdot\hat{\bu}(t)\le 1$. Consequently, $\textbf{1}\cdot\hat{\bu}(t)=1$ and hence
$$\hat{\bu}(t)=\arg\max\limits_{\textbf{1}\cdot u=1}h(t,x,u)=\arg\max\limits_{\textbf{1}\cdot u=1}h_{0}(t,x,u).$$
Applying the Lagrange multiplier method to $\max\limits_{\textbf{1}\cdot u=1}h_0(t,x,u)$ yields
\begin{equation*}
\hat{\bu}(t)=\left(\sigma(t)\sigma^{\top}(t)\right)^{-1}\bigg[G(A(t))\left(\mu(t)-\frac{\textbf{1}^{\top}\left(\sigma(t)\sigma^{\top}(t)\right)^{-1}\mu(t)}{\textbf{1}^{\top}\left(\sigma(t)\sigma^{\top}(t)\right)^{-1}\textbf{1}}\textbf{1}\right)+\frac{\textbf{1}}{\textbf{1}^{\top}\left(\sigma(t)\sigma^{\top}(t)\right)^{-1}\textbf{1}}\bigg].
\end{equation*}
\end{description}
Hence, $A$ satisfies   ODE (\ref{ode.0}), which admits a unique solution by assumption. Therefore, $\hat{\bu}$ defined by (\ref{hatpi2})	is the unique continuous state-independent equilibrium strategy.
\end{proof}

\subsection{Proof of Lemma \ref{well.}}\label{BB}
	Note that ODE (\ref{ode.0}) is equivalent to the following ODE
\begin{equation}\label{ode.3}
	\begin{cases}
		A'(t)=\mathcal{H}(T-t,A(t)),\quad t\in(0,T],\\
		A(0)=0.
	\end{cases}
\end{equation}
 It follows that $\mathcal{H}(T-t,A)$ is  locally Lipschitz  in $A\in[0,+\infty)$, uniformly in $t\in[0,T]$, when $G$  is locally Lipschitz. Therefore, the local wellposedness of  ODE (\ref{ode.3}) is obtained.
 
 Moreover, because, for any $t\in[0,T]$
 \begin{equation*}
 	|\mathcal{H}(T-t,y)|\leq C\left(1+|G(y)|^{2}\right)\leq M:= C\left(1+\sup\limits_{y\in[0,+\infty)}|G(y)|^{2}\right),
 \end{equation*}
where $C$ is some constant, it follows that the local solution does not blow up on $[0,T]$, thus the proof follows.

\end{appendices}

\bibliographystyle{siam}
\bibliography{ref}

\end{document}